\numberwithin{equation}{section}
\numberwithin{figure}{section}
\newcommand\tabcaption{\def\@captype{table}\caption}
\newtheorem{theorem}{Theorem}[section]
\newtheorem{proposition}[theorem]{Proposition}
\definecolor{orange}{RGB}{255,127,0}
\def\d{{\, \rm d}}
\title{A Causation-Based Computationally Efficient Strategy for Deploying Lagrangian Drifters to Improve Real-Time State Estimation}
\author{Erik Bollt\footnote{Department of Mathematics and Department of Electrical and Computer Engineering, Clarkson University, Potsdam, NY 13699-5815}, Nan Chen\footnote{Department of Mathematics, University of Wisconsin-Madison, Madison, WI 53706, USA}~\footnote{Corresponding author; chennan@math.wisc.edu} and Stephen Wiggins \footnote{School of Mathematics, University of Bristol, Bristol, United Kingdom; Department of Mathematics, United States Naval Academy, Annapolis, MD 21402, USA}}
\date{\today}
\begin{document}
\maketitle%\tableofcontents

\abstract{Deploying Lagrangian drifters that facilitate the state estimation of the underlying flow field within a future time interval is practically important. However, the uncertainty in estimating the flow field prevents using standard deterministic approaches for designing strategies and applying trajectory-wise skill scores to evaluate performance. In this paper an information measurement is developed to quantitatively assess the information gain in the estimated flow field by deploying an additional set of drifters. This information measurement is derived by exploiting causal inference. It is characterized by the inferred probability density function of the flow field, which naturally considers the uncertainty. Although the information measurement is an ideal theoretical metric, using it as the direct cost makes the optimization problem computationally expensive. To this end, an effective surrogate cost function is developed. It is highly efficient to compute while capturing the essential features of the information measurement when solving the optimization problem. Based upon these properties, a practical strategy for deploying drifter observations to improve future state estimation is designed. Due to the forecast uncertainty, the approach exploits the expected value of spatial maps of the surrogate cost associated with different forecast realizations to seek the optimal solution. Numerical experiments justify the effectiveness of the surrogate cost. The proposed strategy significantly outperforms the method by randomly deploying the drifters. It is also shown that, under certain conditions, the drifters determined by the expected surrogate cost remain skillful for the state estimation of a single forecast realization of the flow field as in reality.
}

\section{Introduction}
Lagrangian observations are the drifters that follow a parcel of fluid's movement \cite{griffa2007lagrangian, blunden2019look, honnorat2009lagrangian, salman2008using, castellari2001prediction}. They play an essential role in recovering the multiscale nature underlying the turbulent flow velocity field in both the atmosphere and ocean, which is often not directly measured but can be inferred from the observed tracer trajectories. One systematic way to estimate the underlying flow field is to utilize  Lagrangian data assimilation (DA) \cite{apte2013impact, apte2008data, apte2008bayesian, ide2002lagrangian}, which combines the prior knowledge of the flow field from a given forecast model with the observed drifter trajectories via Bayesian inference. It aims to obtain an optimal statistical estimation of the flow state, known as the posterior distribution. Some of the well-known Lagrangian data sets are the Global Drifter Program \cite{centurioni2017global}, which aims to estimate near-surface currents by tracking the surface drifters deployed throughout the global ocean, and the Argo program \cite{gould2004argo} that utilizes a fleet of robotic instruments drifting with the ocean currents for advancing the operational ocean DA. The drifter observations also provide a powerful tool to facilitate the recovery of the ocean eddies in the Arctic regions, where the sea ice floes play the role of the Lagrangian tracers \cite{mu2018arctic, chen2022efficient}. In addition to the standard ocean drifters, other types of Lagrangian tracers include trash or oil in the ocean \cite{van2012origin, garcia2022structured} and balloons collecting atmospheric data \cite{businger1996balloons}.

Due to their manufacturing cost, only a limited number of drifters are available in many practical applications. Therefore, determining the optimal locations for placing these drifters becomes particularly important to facilitate the recovery of the underlying flow field within a future time interval. In many situations, a certain number of observations already exist, and the goal is to optimally deploy a few additional drifters to obtain a maximum amount of information characterizing the flow field. Determining the preferable locations to discharge Lagrangian tracers fundamentally differs from assigning Eulerian observations. The latter may directly measure the velocity field of the underlying flow, which helps establish optimization problems with possibly fast solvers. Efficient methods to optimally assign Eulerian observations include the data-driven inference \cite{willcox2006unsteady, bui2004aerodynamic, kramer2017sparse, dang2021dmd, drmac2016new, wang2021feasibility}, the sparse optimization \cite{manohar2018data, herzog2015sequentially, chu2021data}, the deep learning \cite{carlberg2019recovering, fukami2019super, erichson2020shallow} and the tensor-based flow reconstruction \cite{farazmand2023tensor}. In contrast, the flow velocity at fixed locations is not the direct output from the drifter observations. Furthermore, the flow velocity is a highly nonlinear function of the drifter displacement \cite{chen2014information, apte2013impact, sun2019lagrangian, salman2008hybrid}, which introduces additional difficulties for mathematical analysis. These facts require new strategies to deploy the Lagrangian observations distinct from those applicable to the standard Eulerian ones. Several theoretical studies \cite{poje2002drifter, treshnikov1986optimal} and numerical algorithms \cite{hernandez1995optimizing, chen2020information, tukan2023efficient} have been developed to guide the drifter deployment by exploiting the geometric properties of the flow field. However, one of the major challenges in practice is that the exact underlying flow field is hard to obtain due to the intrinsic turbulent nature of the underlying flow field. Significant uncertainty appears when the flow field is inferred from a limited number of existing observations via Lagrangian DA. The uncertainty is further amplified when the statistical forecast is applied to predict future states. Therefore, it is of practical importance to consider the uncertainty when designing strategies for deploying Lagrangian drifters that facilitate the state estimation of the flow field within a future time interval.

In this paper, an information measurement is developed to quantitatively assess the information gain in the estimated flow field by deploying an additional set of drifters. This information measurement is derived by exploiting causal inference and is justified from the information-theoretic viewpoint \cite{ash2012information, reza1994introduction}. One unique feature of such an information measurement is that the information gain is characterized by the inferred probability density function (PDF) of the flow field, which is a statistical estimate. This fundamentally differs from the standard approaches by minimizing the point-wise difference between the recovered flow field and the truth or a certain deterministic reference solution. The PDF of the inferred flow state naturally considers the critical role of uncertainty in the state estimation. Specifically, the total information gain is measured by the additional information contained in the posterior distribution from Lagrangian DA or the forecast distribution from ensemble prediction beyond the prior knowledge given by the model equilibrium distribution without exploiting observations. Therefore, the information gain in the inferred flow field is attributed to the drifter observations from the causal viewpoint. It is also worth highlighting that since the model equilibrium distribution is utilized as the reference solution for computing the information gain, the optimization procedure of finding the best locations to deploy drifters based on this information measurement does not require the knowledge of the truth realization of the flow field, which facilitates the information criterion to be applied in practice.

Despite systematically characterizing the information gain and the uncertainty reduction, computational challenges remain in solving the optimization problem by directly treating the information measurement as the cost function. Specifically, evaluating such a cost function involves applying Lagrangian DA to obtain the corresponding posterior distribution. However, when the underlying flow field is modeled by a high-dimensional complex nonlinear system, Lagrangian DA is often computationally expensive and is only affordable for a small number of runs within an allotted time. This prevents using a brute-force search algorithm to find the optimal solution in practice. In addition, as there is no simple closed analytic expression for representing the information measurement as a function of the locations of deploying the drifters, calculating its gradient when applying gradient descent algorithms is also computationally prohibitive. To overcome these computational challenges, an effective surrogate cost function is introduced in this paper. It is highly efficient to compute and applies to high-dimensional complex underlying flow systems while capturing the essential features of the information measurement when solving the optimization problem. This surrogate cost function is based on a nonlinear trajectory diagnostic approach that computes the approximate information gain and the uncertainty reduction along each Lagrangian trajectory. Due to its computational efficiency, a phase portrait map of the surrogate cost can be easily constructed. Each grid point in the map indicates the corresponding surrogate cost when a drifter is placed at that location.

Finally, a practical strategy for deploying the drifter observations in a real-time forecast scenario is developed in this paper. The goal is to seek the optimal locations to discharge drifters at the current time instant that benefits the real-time state estimation of the flow field within a future period. Notably, in addition to the unknown true flow field, uncertainty also appears in forecasting the trajectories of the existing drifters beyond the current time instant. Therefore, applying an ensemble forecast of the flow field and the corresponding drifter trajectories becomes essential. Given each possible realization of the trajectories of the existing drifters, the efficient nonlinear trajectory diagnostic approach provides a corresponding spatial map that indicates the cost of deploying new drifters at different locations. The expected value of such spatial maps associated with different forecast realizations is then utilized to determine the locations for deploying new drifters.

The rest of the paper is organized as follows. Section \ref{Sec:Framework} describes the causation-based information measurement that is used to rigorously quantify the information gain by deploying additional drifters. Section \ref{Sec:Model} describes the stochastic modeling and efficient Lagrangian DA framework.  Section \ref{Sec:Strategy} presents the effective surrogate cost function using the nonlinear trajectory diagnostic approach that efficiently determines the locations for discharging the additional drifters. The numerical experiments are included in Section \ref{Sec:Numerics}. Section \ref{Sec:Conclusion} contains further discussions and conclusions.

\section{A Causation-Based Information Measurement to Quantify the Information Gain by Deploying Additional Drifters}\label{Sec:Framework}
\subsection{State estimation with uncertainty}\label{Subsec:Setup_Framework}
The available information includes a turbulent model characterizing the underlying flow field and the observed trajectories of the existing $L_1$ drifters within the time interval $t\in[0,T]$. The goal is to develop an appropriate strategy for optimally deploying additional $L_2$ drifters that facilitate the state estimation of the flow field.

The model is utilized to create the true flow field, which is one random realization of the system. Note that the true flow field is unknown in practice and is required to estimate. Due to the turbulent nature, two model simulations with slightly different initial conditions or random noise forcing will lead to very different solutions. Therefore, using the model to estimate the true flow field will have a significant uncertainty \cite{majda2016introduction, vallis2017atmospheric}. Since the motion of the drifters is driven by the underlying flow field, combining the model simulations with the observed drifter trajectories will naturally improve the state estimation of the flow field. The drifter trajectories can provide more specific information about the flow field in the nearby locations, which serve as the soft constraints for the flow dynamics to advance the estimation of the flow field in other areas. This is the essence of the Lagrangian DA \cite{apte2013impact, apte2008data, apte2008bayesian, ide2002lagrangian}. Denote by $\mathbf{x}$ the observed drifter trajectories and $\mathbf{u}$ the state of the flow field. DA provides a statistical estimation of the state, which is more appropriate for turbulent systems. Due to the uncertainty in the initial condition and the randomness of the system, the state forecast from the model is given by a PDF $p(\mathbf{u})$, which is called the prior distribution as it uses only the information from the model based on the prior knowledge. The information from observations enters into the state estimation procedure by calculating the likelihood $p(\mathbf{x}|\mathbf{u})$, which narrows down the possible range of the estimated state. The combination of the prior and the likelihood via the Bayesian inference leads to the posterior distribution $p(\mathbf{u}|\mathbf{x})$, which is the solution of the DA:
\begin{equation}\label{Bayesian}
  p(\mathbf{u}|\mathbf{x})\sim p(\mathbf{u})p(\mathbf{x}|\mathbf{u}).
\end{equation}
Due to the additional information in observations, the uncertainty in the posterior distribution is expected to be smaller than that in the prior one $p(\mathbf{u})$. DA facilitates uncertainty reduction in the state estimation of the historical data, namely the reanalysis. It also provides a more accurate initial condition that advances the uncertainty reduction in the forecast state.

Since estimating the flow field using the existing $L_1$ drifters contains uncertainty, such a crucial factor needs to be considered in determining the locations of deploying the additional $L_2$ drifters. In the presence of uncertainty, the standard point-wise measurement is not the most appropriate choice for designing the drifter deploying strategy. Therefore, the pre-requisite of designing a suitable strategy to determine the optimal locations of discharging the additional $L_2$ drifters is to develop a new metric that builds upon the statistical estimates and can systematically quantify the information gain with additional drifters.

\subsection{Causation entropy: measuring the uncertainty reduction using information metrics}\label{Subsec:Information_Gain}

The information gain quantifying the uncertainty reduction in one distribution related to another can naturally be defined as distance-like quantity called a divergence between the two distributions. However, the direct difference between the two PDFs is not a suitable choice to measure the gap between the two statistical estimates. A more systematic way of computing the distance between the two distributions is via the information metrics, which directly compare the statistics and are more appropriate than standard point-wise measurements.

Denote by $\mathbf{x}_{Set_1}:=(\mathbf{x}_{1},\ldots,\mathbf{x}_{L_1})^\mathtt{T}$ the set of the existing $L_1$ drifters and $\mathbf{x}_{Set_2}:=(\mathbf{x}_{L_1+1},\ldots,\mathbf{x}_{L_1+L_2})^\mathtt{T}$ the set of the additional $L_2$ ones, where $\mathbf{x}_l$ should be understood as the trajectory of $\mathbf{x}_l(t)$ from $t=0$ to $t= T$ with $[0,T]$ being the period of observations. Further, denote by $\mathbf{A}$ and $\mathbf{B}$ two random variables, where the associated values are given by $\mathbf{a}$ and $\mathbf{b}$. Let $H(\mathbf{B})$ be the entropy of $\mathbf{B}$ and $H(\mathbf{B}| \mathbf{A})$ the conditional entropy of $\mathbf{B}$ conditioned on the given state of $\mathbf{A}$. They are defined as:
\begin{equation}\label{Definition_Entropy}
  H(\mathbf{B}) = -\int_\mathbf{b} p(\mathbf{b})\log(p(\mathbf{b}))\d \mathbf{b}\qquad\mbox{and}\qquad
  H(\mathbf{B}| \mathbf{A}) = -\int_\mathbf{a}\int_\mathbf{b} p(\mathbf{a},\mathbf{b})\log(p(\mathbf{b}|\mathbf{a}))\d \mathbf{b}\d \mathbf{a}.
\end{equation}
These entropies characterize the uncertainties in the distributions $p(\mathbf{b})$ and $p(\mathbf{b}|\mathbf{a})$ from the information-theoretic viewpoint \cite{majda2006nonlinear}. Since the goal is to make use of the additional $L_2$ drifters to maximize the uncertainty reduction, or equivalently the information gain, in $p(\mathbf{u}|\mathbf{x}_{Set_1},\mathbf{x}_{Set_2})$ related to $p(\mathbf{u}|\mathbf{x}_{Set_1})$, it is natural to compute the difference between the associated conditional entropies.
\begin{equation}\label{Causation_Entropy_Original}
    C^{original}_{\mathbf{x}_{Set_2}\to \mathbf{u}} = H(\mathbf{u}|\mathbf{x}_{Set_1}) - H(\mathbf{u}|\mathbf{x}_{Set_1},\mathbf{x}_{Set_2}).
\end{equation}
This leads to the so-called causation entropy \cite{kim2017causation, almomani2020entropic, fish2021entropic, almomani2020erfit}, which characterizes the additional information due to using the extra $L_2$ drifters.
The causation entropy in \eqref{Causation_Entropy_Original} indicates that, given the existing $L_1$ drifters, if the set of the $L_2$ drifters $\mathbf{x}_{Set_2}$ has no contribution to further reduce the uncertainty in recovering the underlying flow field, then the second term on the right-hand side of \eqref{Causation_Entropy_Original} will equal to the first term. In such a situation, the causation entropy becomes zero. Clearly, a larger value of the causation entropy implies a more significant contribution by imposing the additional $L_2$ drifters.

\subsection{Quantifying the information gain using a modified version of the causation entropy}
The causation entropy in \eqref{Causation_Entropy_Original} provides a first path to study the information gain using a set of additional drifter observations. With the information given by the additional observations, the two distributions $p(\mathbf{u}|\mathbf{x}_{Set_1},\mathbf{x}_{Set_2})$ and $p(\mathbf{u}|\mathbf{x}_{Set_1})$ may become more separated when the difference between the mean of the two distributions of alternative interpretations of the stochastic processes increases. Such a discrepancy evidently indicates the additional information provided by $p(\mathbf{u}|\mathbf{x}_{Set_1},\mathbf{x}_{Set_2})$. However, as each of the conditional entropy $H(p(\mathbf{u}|\mathbf{x}_{Set_1},\mathbf{x}_{Set_2}))$ and $H(p(\mathbf{u}|\mathbf{x}_{Set_1}))$ focuses on describing its intrinsic uncertainty, their difference does not take into account the distance from one distribution to the other \cite{xu2007measuring}. As a consequence, if the two terms on the right-hand side of \eqref{Causation_Entropy_Original} have the same profile but different mean values, then the causation entropy is zero, despite that the additional $L_2$ observations indeed have a contribution to improve the estimation of the underlying flow field.

To overcome such a shortcoming, a slight modification of the original causation entropy is introduced. Instead of first computing the intrinsic uncertainty in the two distributions separately and then taking the difference as in \eqref{Causation_Entropy_Original}, the overall distance between the two distributions is adopted to represent the information gain. This can be achieved by utilizing the relative entropy between these two distributions \cite{majda2010quantifying, majda2005information, kleeman2011information},
\begin{equation}\label{Causation_Entropy_KL_Form}
    C^{modified}_{\mathbf{x}_{Set_2}\to \mathbf{u}} = \int p(\mathbf{u}|\mathbf{x}_{Set_1},\mathbf{x}_{Set_2})\log\frac{p(\mathbf{u}|\mathbf{x}_{Set_1},\mathbf{x}_{Set_2})}{p(\mathbf{u}|\mathbf{x}_{Set_1})}\d \mathbf{u},
  \end{equation}
which is also known as Kullback-Leibler divergence or information divergence \cite{kullback1951information, kullback1987letter, kullback1959statistics}.
Despite the lack of symmetry, the relative entropy has two attractive features. First, $C^{modified}_{\mathbf{x}_{Set_2}\to \mathbf{u}} \geq 0$ with equality if and only if $p(\mathbf{u}|\mathbf{x}_{Set_1},\mathbf{x}_{Set_2})=p(\mathbf{u}|\mathbf{x}_{Set_1})$. Second, $C^{modified}_{\mathbf{x}_{Set_2}\to \mathbf{u}}$ is invariant under general nonlinear changes of variables. These provide an attractive framework for assessing the information gain when the drifters are placed differently. A larger value of $C^{modified}_{\mathbf{x}_{Set_2}\to \mathbf{u}}$ means the additional $L_2$ drifter observations results in a more significant change in $p(\mathbf{u}|\mathbf{x}_{Set_1},\mathbf{x}_{Set_2})$ related to $p(\mathbf{u}|\mathbf{x}_{Set_1})$ based on the existing $L_1$ drifters. As a remark, the information theory can also be utilized to quantify model error, model sensitivity, and prediction skill \cite{majda2010quantifying, majda2011link, majda2012lessons, branicki2012quantifying, branicki2014quantifying, kleeman2011information, kleeman2002measuring, delsole2004predictability, branicki2013non, branstator2010two}.

It is worth highlighting that the eventual goal is to use the $L_1+L_2$ drifters to maximize the information gain in the posterior distribution. Therefore, a more natural comparison is between the posterior distribution $p(\mathbf{u}|\mathbf{x}_{Set_1},\mathbf{x}_{Set_2})$ and the distribution $p_{\infty}(\mathbf{u})$ of the model statistical equilibrium without any observational information. In fact, a large distance between $p(\mathbf{u}|\mathbf{x}_{Set_1},\mathbf{x}_{Set_2})$ and $p(\mathbf{u}|\mathbf{x}_{Set_1})$ does not necessarily indicate $p(\mathbf{u}|\mathbf{x}_{Set_1},\mathbf{x}_{Set_2})$ provides additional information beyond the model equilibrium distribution $p_{\infty}(\mathbf{u})$. An extreme example is that $p(\mathbf{u}|\mathbf{x}_{Set_1},\mathbf{x}_{Set_2})=p_\infty(\mathbf{u})$ and both of them are very distinguishable from $p(\mathbf{u}|\mathbf{x}_{Set_1})$. Therefore, it is more justified to compute directly the information gain in $p(\mathbf{u}|\mathbf{x}_{Set_1},\mathbf{x}_{Set_2})$ beyond $p_{\infty}(\mathbf{u})$, although the searching space of the optimization is only for the locations of the additional $L_2$ drifters,
\begin{equation}\label{Causation_Entropy_KL_Form_new}
    C^{final}_{\mathbf{x}_{Set_2}\to \mathbf{u}} = \int p(\mathbf{u}|\mathbf{x}_{Set_1},\mathbf{x}_{Set_2})\log\frac{p(\mathbf{u}|\mathbf{x}_{Set_1},\mathbf{x}_{Set_2})}{p_{\infty}(\mathbf{u})}\d \mathbf{u}.
  \end{equation}

One practical setup for utilizing the framework of information theory in many applications arises when both the distributions are Gaussian so that $p(\mathbf{u}|\mathbf{x}_{Set_1},\mathbf{x}_{Set_2})\sim\mathcal{N}(\bar{\mathbf{u}}_{Set_{1,2}}, \mathbf{R}_{Set_{1,2}})$ and $p_{\infty}(\mathbf{u})\sim\mathcal{N}(\bar{\mathbf{u}}_{m}, \mathbf{R}_{m})$, where the subscript $\cdot_m$ indicates using the model only. In the Gaussian framework, $C^{final}_{\mathbf{x}_{Set_2}\to \mathbf{u}}$ has the following explicit formula \cite{majda2010quantifying, majda2006nonlinear}
\begin{equation}\label{Signal_Dispersion}
\begin{gathered}
  C^{final}_{\mathbf{x}_{Set_2}\to \mathbf{u}} = \left[\frac{1}{2}(\bar{\mathbf{u}}_{Set_{1,2}}-\bar{\mathbf{u}}_{m})^*(\mathbf{R}_{m})^{-1}(\bar{\mathbf{u}}_{Set_{1,2}}-\bar{\mathbf{u}}_{m})\right] \\\qquad\qquad\qquad\qquad\qquad\qquad+ \left[-\frac{1}{2}\log\det(\mathbf{R}_{Set_{1,2}}\mathbf{R}_{m}^{-1}) + \frac{1}{2}(\mbox{tr}(\mathbf{R}_{Set_{1,2}}\mathbf{R}_{m}^{-1})-\mbox{Dim}(\mathbf{u}))\right],
\end{gathered}
\end{equation}
where $\mbox{Dim}(\mathbf{u})$ is the dimension of $\mathbf{u}$.
The first term in brackets in \eqref{Signal_Dispersion} is called `signal', reflecting the information gain in the mean but weighted by the inverse of the model variance, $\mathbf{R}_{m}$, whereas the second term in brackets, called `dispersion', involves only the covariance ratio, $\mathbf{R}_{Set_{1,2}}\mathbf{R}^{-1}_{m}$. The signal and dispersion terms are individually invariant under any (linear) change of variables which maps Gaussian distributions to Gaussians.

\subsection{The optimization problems using the information measurement as the cost function}
The information metric in \eqref{Causation_Entropy_KL_Form_new} can naturally be used as the cost function for the optimization problem of deploying the additional $L_2$ drifters. The optimal solution is given by the drifter discharge locations that correspond to the maximum value of the information metric.

It is worth noting that the time $t$ of $\mathbf{u}$ is not specified in $p(\mathbf{u}|\mathbf{x}_{Set_1},\mathbf{x}_{Set_2})$. If $t\in[0, T]$, then the problem corresponds to the reanalysis situation. If $t>T$, then a forecast scenario is considered. In the latter case, the data assimilation solution $p(\mathbf{u}|\mathbf{x}_{Set_1})$ via the filtering technique at $t=T$ is computed, which serves as the initial value in the given flow model for continuously estimating the future states using both $\mathbf{x}_{Set_1}$ and $\mathbf{x}_{Set_2}$. In both cases, the averaged information gain within a time interval is calculated. Although the focus of this paper is on the real-time estate estimation scenario, it is helpful to summarize the reanalysis situation as well \cite{chen2023launching} to help understand the additional challenge in the real-time case.

\subsubsection{The reanalysis scenario}
In the reanalysis scenario, the goal is to determine the locations of deploying the $L_2$ drifters at time $t=t^*$ such that the information gain in the recovered flow field from Lagrangian DA within an interval $[t^*-\tau, t^*+\tau] \subset [0, T]$ in light of the total of $L_1+L_2$ observed Lagrangian trajectories is maximized. Once the locations of the $L_2$ new drifters at $t^*$ are determined, the associated trajectories within $[t^*-\tau, t^*+\tau]$ can be computed by integrating the governing equations of these drifters forward and backward in time driven by the true flow field. Equivalently, this scenario can be regarded as optimally choosing $L_2$ additional drifters within $t\in[t^*-\tau, t^*+\tau]$ among many candidate drifters covering the entire domain. Note that the only uncertainty in determining the $L_2$ additional drifter locations at time $t^*$ lies in the imperfect estimation of the underlying flow field using the Lagrangian DA with the $L_1$ existing drifters. Once the $L_2$ drifters are placed at time $t^*$, a nonlinear smoother (a typical DA method for reanalysis) \cite{sarkka2013bayesian, evensen2000ensemble} can directly be utilized to obtain the state estimation $p(\mathbf{u}|\mathbf{x}_{Set_1},\mathbf{x}_{Set_2})$ at each time instant within the interval $[t^*-\tau,t^*+\tau]$.  Then \eqref{Causation_Entropy_KL_Form_new} or \eqref{Signal_Dispersion} is utilized to compute the relative entropy that indicates the information gain associated with the locations in deploying the $L_2$ drifters.

\subsubsection{The real-time state estimation scenario}
In the real-time state estimation scenario, the goal is to determine the locations of deploying the additional $L_2$ drifters at time $t=T$ such that the information gain in the real-time state estimation via Lagrangian DA using these $L_1+L_2$ drifters within the future interval $[T, T+\tau]$ is maximized.
Unlike the reanalysis scenario, the true trajectories of the $L_1+L_2$ drifters for estimating the future states within the time interval $[T, T+\tau]$ become unknown. Both the flow field and the Lagrangian trajectories need to be forecasted. The latter introduces a second source of uncertainty.

The calculation of the cost function becomes different in the scenario for improving the real-time future state estimation. First, the existing $L_1$ drifters are used to estimate the underlying flow field up to the current time instant $t=T$, where the state estimation of the flow field $\mathbf{u}(T)$ is given by the filtering solution
\begin{equation}\label{Filtering_Current}
  p(\mathbf{u}(T)|\mathbf{x}_{Set_1} (s\leq T)),
\end{equation}
where $\mathbf{x}_{Set_1} (s\leq T))$ means the trajectories of $\mathbf{x}_{Set_1}$ from $t=0$ to $t=T$. The goal is to deploy the additional $L_2$ drifters at $t=T$. These $L_1+L_2$ drifters are then used to minimize the uncertainty in the real-time state estimation of the flow field within a future time interval $[T, T+\tau]$ via filtering. To achieve this goal, the flow field within the future time interval $[T,T+\tau]$ needs to be predicted. The initial uncertainty from the filtering solution $p(\mathbf{u}(T)|\mathbf{x}_{Set_1} (s\leq T))$ and the turbulent nature of the underlying flow dynamics lead to uncertainty in the predicted flow field. It can usually be characterized by an ensemble of realizations. As the drifters are driven by the underlying flow field, each realization of the flow field from the ensemble simulation corresponds to one possible set of future drifter trajectories. Because of the ensemble of the drifter trajectories, the state estimation of the future flow field is given by a two-step procedure. Denote by $J$ the number of ensemble members. In the first step, Lagrangian DA is applied to find the filtering posterior distribution of the estimated flow field conditioned on each ensemble member of the forecast drifter trajectories. This results in $J$ posterior distributions $p(\mathbf{u}(t)|\mathbf{x}^j_{Set_{1}}(s\leq t), \mathbf{x}^j_{Set_{2}}(s\leq t))$ for $j = 1,\ldots, J$ at each time $t\in[T,T+\tau]$. Then \eqref{Causation_Entropy_KL_Form_new} or \eqref{Signal_Dispersion} is utilized to compute the information gain associated with each posterior distribution. In the second step, the expectation of the information gain over these $J$ values is computed as the final cost, namely
\begin{equation}\label{Causation_Entropy_KL_Form_new_expectation}
    \mathbb{E}_{J}[C^{final}_{\mathbf{x}_{Set_2}\to \mathbf{u}}] = \frac{1}{J}\sum_{j=1}^J\left[\int p(\mathbf{u}|\mathbf{x}^j_{Set_1},\mathbf{x}^j_{Set_2})\log\frac{p(\mathbf{u}|\mathbf{x}^j_{Set_1},\mathbf{x}^j_{Set_2})}{p_{\infty}(\mathbf{u})}\d \mathbf{u}\right].
  \end{equation}

Note that only one forecast ensemble member corresponds to the true future flow field and the associated drifter trajectories. However, using an arbitrary future realization to determine the drifter locations suffers from biases due to the turbulent nature of the system. In contrast, the expected information gain in \eqref{Causation_Entropy_KL_Form_new_expectation} provides a more robust criterion, which leads to a statistically accurate solution. In Section \ref{Sec:Numerics}, the expected information gain in \eqref{Causation_Entropy_KL_Form_new_expectation} will be used to determine the locations of discharging additional drifters. The study will show that the state estimation of a single future realization of the flow field will remain skillful under certain conditions.

\subsection{Computational challenges in the direct optimization}
Since evaluating $p(\mathbf{u}|\mathbf{x}_{Set_1},\mathbf{x}_{Set_2})$ involves running nonlinear Lagrangian DA, there is, in general, no simple explicit formula to compute its gradient that allows using gradient descent-type algorithm for the optimization. In addition, Lagrangian DA is quite expensive for operational forecast systems. It is only computationally affordable to evaluate $p(\mathbf{u}|\mathbf{x}_{Set_1},\mathbf{x}_{Set_2})$ a few times within an allotted time in practice. This prevents using the brute-force (exhaustive) search algorithm as the total number of trials will be $M^{2L_2}$ when a $M\times M$ mesh grid is utilized.

\section{A Computationally Efficient Surrogate Cost Function}\label{Sec:Strategy}

Since evaluating $p(\mathbf{u}|\mathbf{x}_{Set_1},\mathbf{x}_{Set_2})$ and its gradient is computationally expensive, one practical approach to solve the optimization problem is to introduce a surrogate cost function that is computationally efficient while capturing the essential features of the information measurement when solving the optimization problem.

\subsection{Lagrangian descriptor: a computationally efficient trajectory diagnostic approach}
Lagrangian descriptor is a computationally efficient trajectory diagnostic approach, which has been widely used in many different areas \cite{mendoza2010hidden, madrid2009distinguished, lopesino2017theoretical, mancho2013lagrangian}. It can be used as a surrogate cost function to seek the optimal locations for discharging additional drifters. The link between the Lagrangian descriptor and the information measurement will be discussed in the next subsection. This subsection aims to provide the fundamental knowledge of the Lagrangian descriptor.

Denote by $\mathbf{x}=(x,y)^\mathtt{T}$ the two-dimensional displacement and $\mathbf{u}=(u,v)^\mathtt{T}$ the known two-dimensional velocity field.
The general formula of the Lagrangian descriptor is as follows \cite{mancho2013lagrangian, lopesino2017theoretical, garcia2022lagrangian}
\begin{equation}\label{LD_General_Formula}
  \mathcal{L}(\mathbf{x}^*,t^*) = \int_{t^*-\tau_1}^{t^*+\tau_2} F(\mathbf{x}, t) \d t,
\end{equation}
where $F=|\tilde{F}|$ is a scalar field with positive values and $t$ is time. According to \eqref{LD_General_Formula}, $\mathcal{L}$ is the integrated modulus of $\tilde{F}$
along a trajectory from the past $t^*-\tau_1$ to the future $t^*+\tau_2$ that goes through a point $\mathbf{x}^*$ at time $t^*$. This way yields a space- and time-dependent field computed for all $\mathbf{x}^*$ and $t^*$. One commonly used Lagrangian descriptor is by taking $F$ to be the arc length of the path traced by the trajectory. That is,
\begin{equation}\label{LD_VelocityBased}
M_{vel}(\mathbf{x}^*,t^*) = \int_{t^*-\tau_1}^{t^*+\tau_2} \sqrt{\left(\frac{\partial x}{\partial t}\right)^2+\left(\frac{\partial y}{\partial t}\right)^2} \d t = \int_{t^*-\tau_1}^{t^*+\tau_2} \sqrt{u^2+v^2} \d t.
\end{equation}
Once the Lagrangian descriptor is computed, it is usually normalized to its maximum value in space for illustration purposes. Several other Lagrangian descriptors have also been widely used in practice. One is analog to \eqref{LD_VelocityBased} but exploits the vorticity instead of the arc length in defining $F$. Such a vorticity-based Lagrangian descriptor is essential for identifying vortex-like patterns, such as the eddy detection in the ocean. Another Lagrangian descriptor takes the direct difference between $F$ at the current and a former time instant. It is a useful metric to identify the source of a given target, which is crucial for tracing the source of the oil split and many other environmental problems.

\subsection{Developing a surrogate cost function with the help of Lagrangian descriptor}\label{Subsec:Criteria}
The Lagrangian descriptor based on the arc length of the trajectory in \eqref{LD_VelocityBased} will be adopted as the building block in developing the surrogate cost function.

Despite the unknown true underlying flow field, the inferred flow field can be naturally used to compute the Lagrangian descriptor. Notably, the uncertainty in the inferred or forecast states brings about multiple possible realizations of the flow field. In the reanalysis situation, different realizations of the flow field can be effectively sampled from the posterior distribution of the Lagrangian DA \cite{chen2023launching}. In the real-time forecast scenario, the forecast ensemble members serve as possible realizations. To account for such uncertainties, the Lagrangian descriptor is first computed based on each of these potential realizations of the flow field and the corresponding drifter trajectories. Then taking an expectation over all these realizations leads to
\begin{equation}\label{LD_VelocityBased_Expectation}
\mathbb{E}_{\mathbf{x}}[M_{vel}(\mathbf{x}^*,t^*)] = \mathbb{E}_{\mathbf{x}}\left[\int_{t^*-\tau_1}^{t^*+\tau_2} \sqrt{\left(\frac{\partial x}{\partial t}\right)^2+\left(\frac{\partial y}{\partial t}\right)^2} \d t\right] = \mathbb{E}_{\mathbf{x}}\left[\int_{t^*-\tau_1}^{t^*+\tau_2} \sqrt{u^2+v^2} \d t\right].
\end{equation}
A high value of the expected Lagrangian descriptor in \eqref{LD_VelocityBased_Expectation} can be attributed to two reasons. First, the large value is caused by the long distances traveled by the drifters. In such a situation, the drifters can provide sufficient knowledge about the underlying flow field as they collect information at different spatial locations.  As the drifters travel fast, the locations they pass by usually have strong velocities that lead to a large signal-to-noise ratio in identifying the flow field. In other words, the state variables have strong observability, which allows them to be accurately estimated. Second, a large value of $\mathbb{E}_{\mathbf{x}}[M_{vel}(\mathbf{x}^*,t^*)]$ can also be triggered by the large uncertainty in the estimated states. When significant uncertainties appear in the state estimation, many possible realizations of the flow field have strong values, which leads to large values in the corresponding Lagrangian descriptors. When the expectation is taken, these realizations enhance the value of $\mathbb{E}_{\mathbf{x}}[M_{vel}(\mathbf{x}^*,t^*)]$ in \eqref{LD_VelocityBased_Expectation}. These empirical arguments have been justified using simple analytically solvable examples in \cite{chen2023launching} for the reanalysis scenario. They will be further illustrated in Section \ref{Sec:Numerics}.

The expected Lagrangian descriptor can be linked with the information measurement \eqref{Causation_Entropy_KL_Form_new}. The expected Lagrangian descriptor having a large value corresponds to the locations that either the underlying flow field is strong or the uncertainty is significant. These findings indicate that discharging drifters at those locations will allow the drifters to travel a long distance and carry a large amount of information of the flow field (the former case) or advance the reduction of the local uncertainty (the latter case). In both cases, the information gain is significant. In addition to determining the drifter deployment based on the Lagrangian descriptor, it is also desirable to place the drifters at locations that are separate from each other. This will prevent the drifters from carrying out similar information if their trajectories nearly overlap.

With the above justifications, it is natural to adopt the expected Lagrangian descriptor in \eqref{LD_VelocityBased_Expectation} as the surrogate cost function in solving the optimization problem. Due to its computational efficiency, a phase portrait map of this surrogate cost can easily be constructed. Each grid point in the map indicates the corresponding surrogate cost when a drifter is placed at that location. By excluding the areas around the existing drifters, the maximum of the remaining map corresponds to the optimal solution.

Finally, it is worth remarking that the surrogate cost function via the expected Lagrangian descriptor is only used in solving the optimization problem. The original information measurement \eqref{Causation_Entropy_KL_Form_new} will be used to compute the exact cost and validate the result.

\subsection{Sequential strategy}\label{Subsec:Sequential_Strategy}
In the reanalysis scenario, the Lagrangian descriptor can be used to determine the optimal location for one new drifter, with which an updated flow field and the associated Lagrangian descriptor can be computed to determine the location for the next drifter. Such a sequential drifter deploying strategy via the greedy algorithm helps improve the results. However, in the real-time state estimation scenario, the newly deployed drifters at time $t=T$ will not help improve state estimation at this instant, as it takes time for the flow estimation to respond to the additional observations. In other words, the newly deployed drifters will not help reduce the forecast uncertainty of the flow field. This implies the sequential drifter deployment strategy is not applicable.

\subsection{The entire procedure of deploying drifter observations}\label{Subsec:Procedure}
The entire procedure of deploying drifter observations is summarized in Figure \ref{Illustration_Figure}. To distinguish the differences between the reanalysis and the real-time state estimation scenarios, the procedures of these two cases are compared in the figure.

\subsubsection{The reanalysis scenario}
In the reanalysis scenario, the existing $L_1$ drifters are used to estimate the underlying flow field via Lagrangian DA based on the smoothing technique. The uncertainty in the state estimation allows us to sample a set of flow trajectories, which is then used to form the expected Lagrangian descriptor. Computing the resulting Lagrangian descriptor map for the surrogate cost function leads to the locations of deploying the $L_2$ new drifters.

To validate the result, the $L_1+L_2$ drifters are used in Lagrangian DA to calculate $p(\mathbf{u}|\mathbf{x}_{Set_1},\mathbf{x}_{Set_2})$, which is then used to compute the information gain via the relative entropy \eqref{Causation_Entropy_KL_Form_new} or \eqref{Signal_Dispersion}.

\subsubsection{The real-time state estimation scenario}
In the real-time state estimation scenario, the existing $L_1$ drifters are first used to estimate the underlying flow field via Lagrangian DA up to the current time instant $t=T$ based on the filtering technique. The conditional distribution $p(\mathbf{u}(T)|\mathbf{x}_{Set_1} (s\leq T))$ is used as the initial value to forecast the underlying flow field via an ensemble forecast method. Specifically, the initial condition of each ensemble member is drawn from $p(\mathbf{u}(T)|\mathbf{x}_{Set_1} (s\leq T))$ and then the flow model is run forward. The corresponding Lagrangian descriptor can be computed for each forecast realization of the flow field. The Lagrangian descriptors for different realizations are then averaged to yield an expected value of the Lagrangian descriptor. This is utilized to determine the locations of discharging new drifters.

To validate the result, an ensemble forecast is adopted to predict the trajectories of the underlying flow field, which is then used to obtain one set of the trajectories of $L_1+L_2$ drifters for each ensemble member. An estimated flow field via Lagrangian DA is computed for each ensemble member using the drifter trajectories, which are used to calculate the associated information gain. Finally, the overall cost is provided by the average information gain.

One nuance in characterizing the uncertainty in the two scenarios worth remarking on is the following. The true flow field that drives the drifter trajectories is unknown in the reanalysis situation. Therefore, the Lagrangian descriptor is naturally computed based on the information from the multiple sampled flow field via Lagrangian DA that accounts for the uncertainty in the state estimation. In contrast, in the real-time state estimation scenario, each forecast realization of the flow field that drives the associated drifter trajectories, which are subsequently used for Lagrangian DA, is known. Although Lagrangian DA can be applied to obtain the time evolution of the posterior estimate of such a realization, based on which a set of sampled trajectories is used to account for the uncertainty, this step is not adopted in the strategy developed here for three reasons. First, this sampling strategy has to be applied to all the ensemble members of the flow realizations. The overall computational cost can be significant due to the multiple runs of Lagrangian DA. Second, unlike the reanalysis situation, each true realization of the flow field is known here. It can be naturally used to force the Lagrangian descriptor field to better characterize the flow structure. Third, the dominant uncertainty in the real-time situation is already contained in the ensemble spread of the forecast flow field. It has been considered in computing the expected value of the Lagrangian descriptor.

\begin{figure}[h]
    \centering\hspace*{0cm}
    \includegraphics[width=14cm]{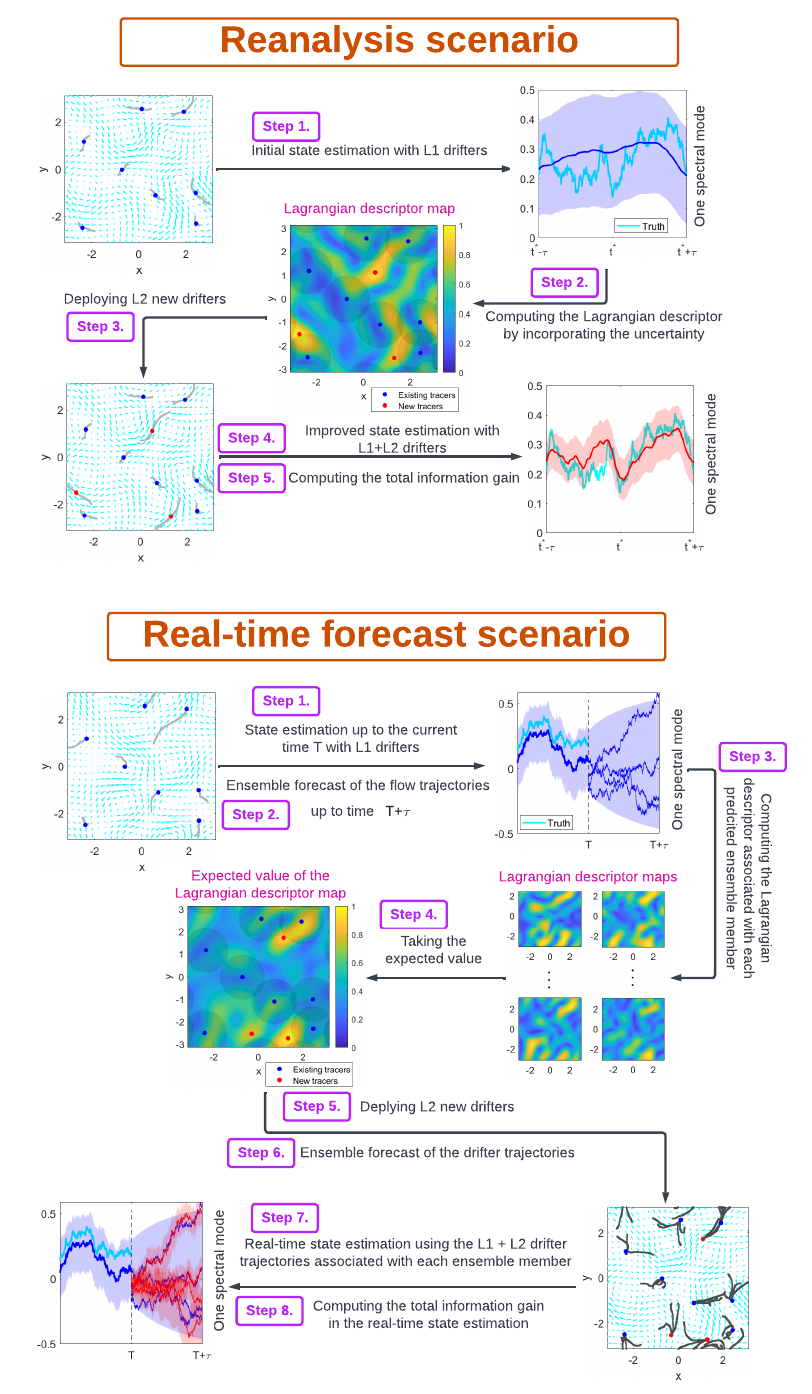}
    \caption{Procedures of deploying drifters in both the reanalysis and real-time forecast scenarios. }
    \label{Illustration_Figure}
\end{figure}
\clearpage

\section{An Efficient Modeling and Lagrangian Data Assimilation Framework}\label{Sec:Model}
All the experiments in this paper are carried out by exploiting an efficient Lagrangian DA framework that contains suitable stochastic forecast models. The unique structure of the stochastic forecast models in such a nonlinear Lagrangian DA framework facilitates the use of analytic formulae for estimating the state of the underlying flow field. With the resulting conditional Gaussian distribution of the flow field, the analytic formula \eqref{Signal_Dispersion} can further be used to compute the information gain. These desirable properties significantly reduce the computational cost and allow us to carry out a brute-force search algorithm to obtain the optimal solution using the exact cost function defined by the causal inference. The results help validate the effectiveness of the proposed approach. Recall that, in practice, the brute-force search algorithm is prohibitively expensive when the standard ensemble DA is applied. Therefore, the computationally efficient strategy developed in this work becomes essential. Notably, the stochastic modeling approach in such an efficient Lagrangian DA framework can provide applicable approximate forecast models that accelerate the Lagrangian DA for many practical problems.

\subsection{Modeling random flow fields}
The underlying turbulent flow model utilized in this work is assumed to be given by a finite summation of spectral modes. Random spectral coefficients are adopted to mimic the intrinsic turbulent features in the flow field, which have been widely used in practice \cite{chen2015noisy, majda2003introduction}. In such a modeling framework, the underlying flow velocity field reads
\begin{equation}\label{Ocean_Velocity}
  \mathbf{u}(\mathbf{x},t) = \sum_{\mathbf{k}\in\mathcal{K},\alpha\in\mathcal{A}}\hat{u}_{\mathbf{k},\alpha}(t)e^{i\mathbf{k}\mathbf{x}}\mathbf{r}_{\mathbf{k},\alpha},
\end{equation}
where $\mathbf{x}=(x,y)^\mathtt{T}$ is the two-dimensional coordinate with a double periodic boundary condition $x,y\in[-\pi,\pi]$. There are two indices for the spectral modes. The index $\mathbf{k}=(k_1,k_2)^\mathtt{T}$ is the wavenumber, and the index $\alpha$ represents the characteristic of the mode, including, for example, the gravity modes and the geophysically balanced modes in the study of many geophysical systems. The set $\mathcal{K}$ consists of the wavenumbers that satisfy $-K_{\mbox{max}}\leq k_1, k_2\leq K_{\mbox{max}}$ with $K_{\mbox{max}}$ being an integer that is pre-determined. The vector $\mathbf{r}_{\mathbf{k},\alpha}$ is the eigenvector, which links the two components of velocity fields $u$ and $v$. For conciseness of notations, the explicit dependence of $\alpha$ on $\hat{u}_{\mathbf{k},\alpha}$ and $\mathbf{r}_{\mathbf{k},\alpha}$ in \eqref{Ocean_Velocity} is omitted in the following discussions. This is a natural simplicity when only one type of characteristic mode is used, which will be the case in the numerical simulations of this work. There, the incompressible flow is considered that includes only the geophysically balanced modes. Therefore, the Fourier coefficient and the eigenvector are written as $\hat{u}_{\mathbf{k}}$ and $\mathbf{r}_{\mathbf{k}}$. Since the left-hand side of \eqref{Ocean_Velocity} is evaluated at physical space, the Fourier coefficient $\hat{u}_{-\mathbf{k}}$ and the eigenvector $\mathbf{r}_{-\mathbf{k}}$ are the complex conjugate of $\hat{u}_{\mathbf{k}}$ and $\mathbf{r}_{\mathbf{k}}$, respectively, for all $\mathbf{k}$. It is worth noting that the framework is not limited to the Fourier basis. Other basis functions and boundary conditions can be utilized in \eqref{Ocean_Velocity} for various applications in practice. Therefore, the representation in \eqref{Ocean_Velocity} is general.

Stochastic models are used to describe the time evolution of each Fourier coefficient $\hat{u}_{\mathbf{k}}$ in \eqref{Ocean_Velocity}, which is a computationally efficient way to mimic the turbulent flow features. Among different stochastic models, the linear stochastic model, namely the complex Ornstein-Uhlenbeck (OU) process \cite{gardiner1985handbook}, is a widely used choice:
\begin{equation}\label{OU_process}
  \frac{\d\hat{u}_{\mathbf{k}}}{\d t} = (- d_\mathbf{k} + i\omega_\mathbf{k}) \hat{u}_{\mathbf{k}} + f_\mathbf{k}(t) + \sigma_\mathbf{k}\dot{W}_\mathbf{k},
\end{equation}
where $d_\mathbf{k}, \omega_\mathbf{k}$ and $f_\mathbf{k}(t)$ are damping, phase and deterministic forcing, $\sigma_\mathbf{k}$ is the noise coefficient and $\dot{W}_\mathbf{k}$ is a white noise. The constants $d_\mathbf{k}$, $\omega_\mathbf{k}$ and $\sigma_\mathbf{k}$ are real-valued while the forcings are complex. The stochastic noise in the linear stochastic model is utilized to effectively parameterize the nonlinear deterministic time evolution of chaotic or turbulent dynamics \cite{majda2016introduction, farrell1993stochastic, berner2017stochastic, branicki2018accuracy, majda2018model, li2020predictability, harlim2008filtering, kang2012filtering} such that different Fourier coefficients (excluding those complex conjugate pairs) are independent with each other. This significantly reduces the computational cost as the operations on the summation of complicated nonlinear terms are replaced by a single stochastic term. The decoupled equations for different modes also accelerate the model forecast. These features are particularly useful for efficient data assimilation since the forecast focuses on the statistics instead of the precise value of each single trajectory. Note that the decoupling between different spectral modes does not break the spatial dependence between the state variables at different grid points in physical space, which is automatically recovered after the spatial reconstruction in light of all the spectral modes.

The mathematical framework of modeling random flow fields in \eqref{Ocean_Velocity}--\eqref{OU_process} has been widely applied to studying turbulent flows. Examples include modeling the rotating shallow water equation \cite{chen2015noisy} and the quasi-geostrophic equation \cite{chen2023stochastic}. Such a framework has also been utilized as an effective surrogate forecast model in data assimilation to recover the flow fields associated with the Navier-Stokes equations \cite{branicki2018accuracy}, the moisture-coupled tropical waves \cite{harlim2013test} and a nonlinear topographic barotropic model \cite{chen2023uncertainty}. In addition, the framework has been used to quantify the uncertainty in geophysical turbulent flows \cite{branicki2013non, chen2023uncertainty, chen2016model}.

\subsection{Lagrangian data assimilation}
Lagrangian data assimilation exploits the observed moving trajectories from drifters to infer the underlying velocity field \cite{apte2013impact, apte2008data, apte2008bayesian, ide2002lagrangian}. It is a widely used approach for state estimation and prediction in geophysics, climate science, and hydrology \cite{griffa2007lagrangian, blunden2019look, honnorat2009lagrangian, salman2008using, castellari2001prediction}.

The observational process is given by the evolution equation of the Lagrangian trajectory,
\begin{equation}\label{Tracer_eqn}
\frac{\d\mathbf{x}}{\d t} = \mathbf{u}(\mathbf{x},t) + \sigma_\mathbf{x}\mathbf{W}_\mathbf{x},
\end{equation}
where $\mathbf{W}_\mathbf{x}$ is a two-dimensional real-valued white noise representing the observational uncertainty and small-scale perturbations to the observed drifter trajectories while $\sigma_\mathbf{x}$ is the noise coefficient. The velocity field $\mathbf{u}$ in \eqref{Tracer_eqn} is given by \eqref{Ocean_Velocity}, which is a highly nonlinear function of $\mathbf{x}$. Usually, $L$ equations of \eqref{Tracer_eqn} are used in Lagrangian data assimilation, representing the observed trajectories of $L$ Lagrangian drifters.

Define $\mathbf{X}=(\mathbf{x}_1,\ldots,\mathbf{x}_L)^\mathtt{T}$ the collection of the $L$ observed drifter trajectories and $\mathbf{U}=\{\hat{u}_\mathbf{k}\}$ the vector that collects the Fourier coefficients. In light of \eqref{Ocean_Velocity}, \eqref{OU_process} and \eqref{Tracer_eqn}, the Lagrangian data assimilation can be written in the following form:
\begin{subequations}\label{eq:cgns}
\begin{align}
\frac{\d \mathbf{X}(t)}{\d t} &= \mathbf{A}(\mathbf{X}, t) \mathbf{U}(t) + \sigma_\mathbf{x} \dot{\mathbf{W}}_\mathbf{X}(t),\label{eq:cgns_X}\\
\frac{\d \mathbf{U}(t)}{\d t} &=  \mathbf{F}_\mathbf{U} + \boldsymbol{\Lambda} \mathbf{U}(t)  + \boldsymbol{\Sigma}_\mathbf{U} \dot{\mathbf{W}}_\mathbf{U}(t),\label{eq:cgns_U}
\end{align}
\end{subequations}
where $\mathbf{A}(\mathbf{X}, t)$ contains all the Fourier bases and is, therefore, a highly nonlinear function of $\mathbf{X}$. Despite the strong nonlinearity in the observational process \eqref{eq:cgns_X}, analytic solutions are available for the Lagrangian data assimilation when the linear stochastic models are used as the surrogate forecast model \cite{liptser2013statistics}. This facilitates the state estimation and uncertainty quantification \cite{chen2014information}.

\begin{proposition}[Posterior distribution of Lagrangian data assimilation: Filtering]
Given one realization of the drifter trajectories $\mathbf{X}(s\leq t)$, the filtering posterior distribution $p(\mathbf{U}(t)|\mathbf{X}(s\leq t))$ of Lagrangian data assimilation \eqref{eq:cgns} is conditionally Gaussian,
 where the time evolutions of the conditional mean $\boldsymbol\mu$ and the conditional covariance $\bf R$ are given by
\begin{subequations}\label{eq:filter}
\begin{align}
\frac{\d\boldsymbol{\mu}}{\d t} &= \left(\mathbf{F}_\mathbf{U} + \boldsymbol{\Lambda} \boldsymbol{\mu}\right)  + \sigma_\mathbf{x}^{-2}\mathbf{R}\mathbf{A}^\ast\left(\frac{\d \mathbf{X}}{\d t} - \mathbf{A}\boldsymbol{\mu} \right),\label{eq:filter_mu}\\
\frac{\d\mathbf{R}}{\d t} &= \boldsymbol{\Lambda}\mathbf{R} + \mathbf{R}\boldsymbol{\Lambda}^\ast + \boldsymbol{\Sigma}_\mathbf{U}\boldsymbol{\Sigma}_\mathbf{U}^\ast - \sigma_{\mathbf{x}}^{-2}\mathbf{R}\mathbf{A}^\ast\mathbf{A}\mathbf{R},\label{eq:filter_R}
\end{align}
\end{subequations}
with $\cdot^*$ being the complex conjugate transpose.
\end{proposition}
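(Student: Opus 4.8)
The plan is to recognize the Lagrangian data assimilation system \eqref{eq:cgns} as a member of the conditional Gaussian nonlinear system (CGNS) framework of Liptser--Shiryaev, and then derive the filtering equations \eqref{eq:filter} as the corresponding Kalman--Bucy-type equations for this class. The crucial structural observation is that although $\mathbf{A}(\mathbf{X},t)$ is a highly nonlinear function of the observed trajectories $\mathbf{X}$, it does \emph{not} depend on the unobserved flow state $\mathbf{U}$; likewise the drift of \eqref{eq:cgns_U} is linear in $\mathbf{U}$ with coefficients $\mathbf{F}_\mathbf{U}$, $\boldsymbol{\Lambda}$ that are (here) constant, and the noise coefficients $\sigma_\mathbf{x}$ and $\boldsymbol{\Sigma}_\mathbf{U}$ do not depend on $\mathbf{U}$ either. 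Hence, conditioned on a realization of the observed path $\mathbf{X}(s\le t)$, the pair $(\mathbf{X},\mathbf{U})$ has exactly the structure for which the conditional law of $\mathbf{U}(t)$ given $\mathbf{X}(s\le t)$ is Gaussian. This is the content of the cited result \cite{liptser2013statistics}, so the first step is simply to check the hypotheses of that theorem against the coefficients of \eqref{eq:cgns}.

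Next I would establish the Gaussianity by induction/propagation: assuming $p(\mathbf{U}(t)|\mathbf{X}(s\le t))\sim\mathcal{N}(\boldsymbol{\mu}(t),\mathbf{R}(t))$, one writes the joint infinitesimal evolution of $(\mathbf{X},\mathbf{U})$ over $[t,t+\d t]$. Because \eqref{eq:cgns_X} is linear in $\mathbf{U}$ given $\mathbf{X}$, the increment $\d\mathbf{X}$ is (conditionally) Gaussian, jointly Gaussian with $\mathbf{U}(t+\d t)$, and applying the standard Gaussian conditioning formula to incorporate the new observation increment $\d\mathbf{X} - \mathbf{A}\boldsymbol{\mu}\,\d t$ (the innovation) preserves Gaussianity and yields update rules for the mean and covariance. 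Passing to the continuous-time limit, the innovation $\d\mathbf{X}-\mathbf{A}\boldsymbol{\mu}\,\d t$ plays the role of the observation noise with covariance $\sigma_\mathbf{x}^2\mathbf{I}\,\d t$, the gain matrix is $\sigma_\mathbf{x}^{-2}\mathbf{R}\mathbf{A}^\ast$, and one reads off \eqref{eq:filter_mu}. For the covariance, combining the prediction step (which contributes $\boldsymbol{\Lambda}\mathbf{R} + \mathbf{R}\boldsymbol{\Lambda}^\ast + \boldsymbol{\Sigma}_\mathbf{U}\boldsymbol{\Sigma}_\mathbf{U}^\ast$ from the linear SDE \eqref{eq:cgns_U}) with the Bayesian update step (which subtracts the Schur-complement term $\sigma_\mathbf{x}^{-2}\mathbf{R}\mathbf{A}^\ast\mathbf{A}\mathbf{R}$) gives the Riccati equation \eqref{eq:filter_R}. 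A clean way to organize the continuous-time limit rigorously is to derive the Zakai or Kushner--Stratonovich equation for the conditional density and verify directly that a Gaussian ansatz with $(\boldsymbol{\mu},\mathbf{R})$ solving \eqref{eq:filter} satisfies it; uniqueness of the filter then finishes the argument.

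The main obstacle is the continuous-time limit and the appearance of $\d\mathbf{X}/\d t$ in \eqref{eq:filter_mu}: strictly speaking $\mathbf{X}$ is not differentiable, so this expression must be interpreted in the It\^o sense, i.e. $\mathbf{R}\mathbf{A}^\ast(\d\mathbf{X}-\mathbf{A}\boldsymbol{\mu}\,\d t)$ as a stochastic differential driven by the innovation process. Making this precise requires either invoking the general nonlinear filtering machinery (the innovations approach, establishing that the innovation process is a Brownian motion with respect to the observation filtration) or citing the Liptser--Shiryaev theorem wholesale. A secondary technical point worth checking is that $\mathbf{A}(\mathbf{X},t)$ is sufficiently regular (e.g. locally bounded, measurable and adapted) that the conditional expectations are well defined and the SDEs \eqref{eq:filter} have solutions up to the relevant time horizon; since $\mathbf{A}$ is built from bounded Fourier modes $e^{i\mathbf{k}\mathbf{x}}$ this is immediate. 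Given the scope of the paper, I would present the proof mainly as a verification that \eqref{eq:cgns} fits the CGNS template and then quote \cite{liptser2013statistics}, relegating the innovations-process argument to a remark.
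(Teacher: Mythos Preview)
Your proposal is correct and matches the paper's approach: the paper simply observes that \eqref{eq:cgns} fits the conditional Gaussian template and cites \cite{liptser2013statistics, chen2018conditional} for the proof. Your additional sketch of the innovations/Kushner--Stratonovich derivation and the regularity check for $\mathbf{A}(\mathbf{X},t)$ go beyond what the paper presents, but the core strategy---verify the CGNS structure and quote Liptser--Shiryaev---is identical.
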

\begin{proof}
The proof can be found in \cite{liptser2013statistics, chen2018conditional}.
\end{proof}

The filtering data assimilation is used in the real-time scenario. For the reanalysis situation, the data assimilation solution is given by the smoothing, the details of which can be found in Appendix.

\section{Numerical Experiments}\label{Sec:Numerics}

\subsection{Comparison of the optimization with the computationally efficient surrogate cost function and the brute-force search}
The experiment in this subsection is based on the reanalysis scenario, which is more straightforward than the real-time forecast situation and is, therefore, more appropriate to be used to explain the effectiveness of the surrogate cost function.

The underlying flow model is given by \eqref{Ocean_Velocity}--\eqref{OU_process} with double periodic boundary conditions. The flow field is assumed incompressible, and no mean background flow is included. The parameters are
\begin{equation}\label{Parameters_eddy_model}
  d_\mathbf{k} = 0.5,\qquad \omega_\mathbf{k}=0,\qquad f_\mathbf{k}=0\qquad \mbox{and}\qquad \sigma_\mathbf{k} = 0.5,
\end{equation}
for all $\mathbf{k}$ such that the flow field has an equipartition of the energy. The range of the spectral modes is within $\mathbf{k}\in[-3,3]^2$ and there are in total $48$ modes. The initial distribution of drifters is uniform, consistent with the statistical equilibrium state \cite{chen2014information}. The time instant $t^*=5$ is chosen for deploying new drifters in both scenarios described in Section \ref{Subsec:Setup_Framework}. Note that since the decorrelation time of all Fourier coefficients is only $2$ time units, the initial value has little impact on the flow field at $t^*=5$. The parameter $\tau =1$ is chosen and the time window $[t^*-\tau, t^*+\tau]$ is adopted.

The number of existing drifters is $L_1=10$ and additional $L_2=4$ drifters are added to the field.

\subsubsection{Comparison of the computational costs}
The experiments are based on a laptop with the Processor: Intel(R) Core(TM) i7-1065G7 CPU 1.30GHz and RAM 16.0GB.

When brute-force search algorithm is carried out, a coarse mesh grid with only $10\times 10$ grid points is used to compute the field of the information gain. It takes in total $3428.6$ seconds ($57.14$ minutes) using the greedy algorithm to discharge $L_2=4$ additional drifters. On average, $850$ seconds is needed in even such a coarse grid to compute once the map of the information gain. The computational cost is proportional to the number of grids. If a $32\times 32$ grid is used as in the Lagrangian descriptor case, it will take about $10$ hours to finish the calculation. In contrast, it takes only $131.83$ seconds ($2.20$ minutes) for computing the field of the Lagrangian descriptor on a $32\times 32$ mesh grid. The comparison is also taken by including a set of random strategy of deploying the drifters. When 100 random trials is included, the total computational cost is 1614 seconds (26.92 minutes) without the distance criterion, and 2264 seconds (37.73 minutes) with the distance criterion. Therefore, the computational time of carrying out the Lagrangian descriptor is equal to roughly 8 (without the distance criterion) or 6 (with the distance criterion) random trials. One run of the brute-force searching algorithm equals to about 270 runs of the Lagrangian descriptor. To summarize:
\begin{equation*}
  2.2 \mbox{minutes} = \mbox{Lagrangian descriptor} = 8~ \mbox{or}~ 6~ \mbox{random trials} =  0.37\%~ \mbox{of the brute-force search}.
\end{equation*}

\subsubsection{Comparison of different methods}

Panel (a) shows the flow map of the information gain using the brute-force search and the greedy algorithm of sequentially determining the locations of the $L_2=4$ drifters. In each subpanel, the color map shows the information gain \eqref{Signal_Dispersion} if the next drifter is placed at that location. It can be seen that the flow map stays in the low values if the new drifter is too close to the existing ones. This is the first justification for imposing the distance criterion. As a second justification, it is seen from the two sets of random experiments in Panel (d) that the information gain stays at a higher level if the distance criterion is used. Next, the $L_1$ drifters are adopted to recover the underlying flow field, which is then used to compute the Lagrangian descriptor as the surrogate cost function. Based on the surrogate cost map, the $L_2=4$ drifters are placed all at once. It is seen that when the Lagrangian descriptor is included as one of the criteria, the resulting information gain is higher than the average value of the random trials, even with the distance criterion. If the minimum instead of maximum of the Lagrangian descriptor is utilized, then the corresponding information gain will be lower by about two units, below the average value of the random trials. If the four drifters are placed sequentially, then the information gain is further enhanced. Here, the sequential method means each time one drifter is placed. Then, all the drifters on the field are used to re-compute the surrogate cost map using the expected Lagrangian descriptor, as was described in Section \ref{Subsec:Sequential_Strategy}. See Figure \ref{LD_Sequential} for the change of the map of the surrogate cost function computed from the Lagrangian descriptor. Applying the sequential strategy, the resulting information gain is more significant than all the $100$ random trials without the distance criterion. Note that the maps from the Lagrangian descriptor using the sequential strategy are similar overall, but the uncertainty triggers some differences. Such differences may slightly change the locations to deploy new drifters, which then helps improve the information gain. Nevertheless, since the maps of the surrogate cost using the all-at-once and sequential strategies do not differ too much, it can be concluded that the all-at-once strategy provides an appropriate suboptimal solution. Finally, it is also remarkable that three drifter locations using the Lagrangian descriptor are close to the selected locations from the brute-force search. This proves using the Lagrangian descriptor as a surrogate optimization cost function.

One interesting finding is that the brute-force search result depends on the spatial resolution. In the experiment here, the smallest scale mode has a wavenumber $\mathbf{k}=(3,3)$. It can be seen that using $3\times 3$ mesh grids, the information gain is even worse than $99$ out of $100$ random trials with the distance criterion. With a $5\times 5$ mesh grid, the information gain is still not as high as the one from the Lagrangian descriptor, which is much cheaper. Notably, in practice, the spatial resolution of the system will be much higher. If the choice of the mesh grid cannot resolve a certain range of scales, then it is questionable if the brute-force search will even provide a useful result, not to mention its prohibitively high computational cost. Note that the computational cost of Lagrangian DA also shoots up when the dimension of the underlying flow system increases.

\begin{figure}[h]
    \centering\hspace*{0cm}
    \includegraphics[width=18cm]{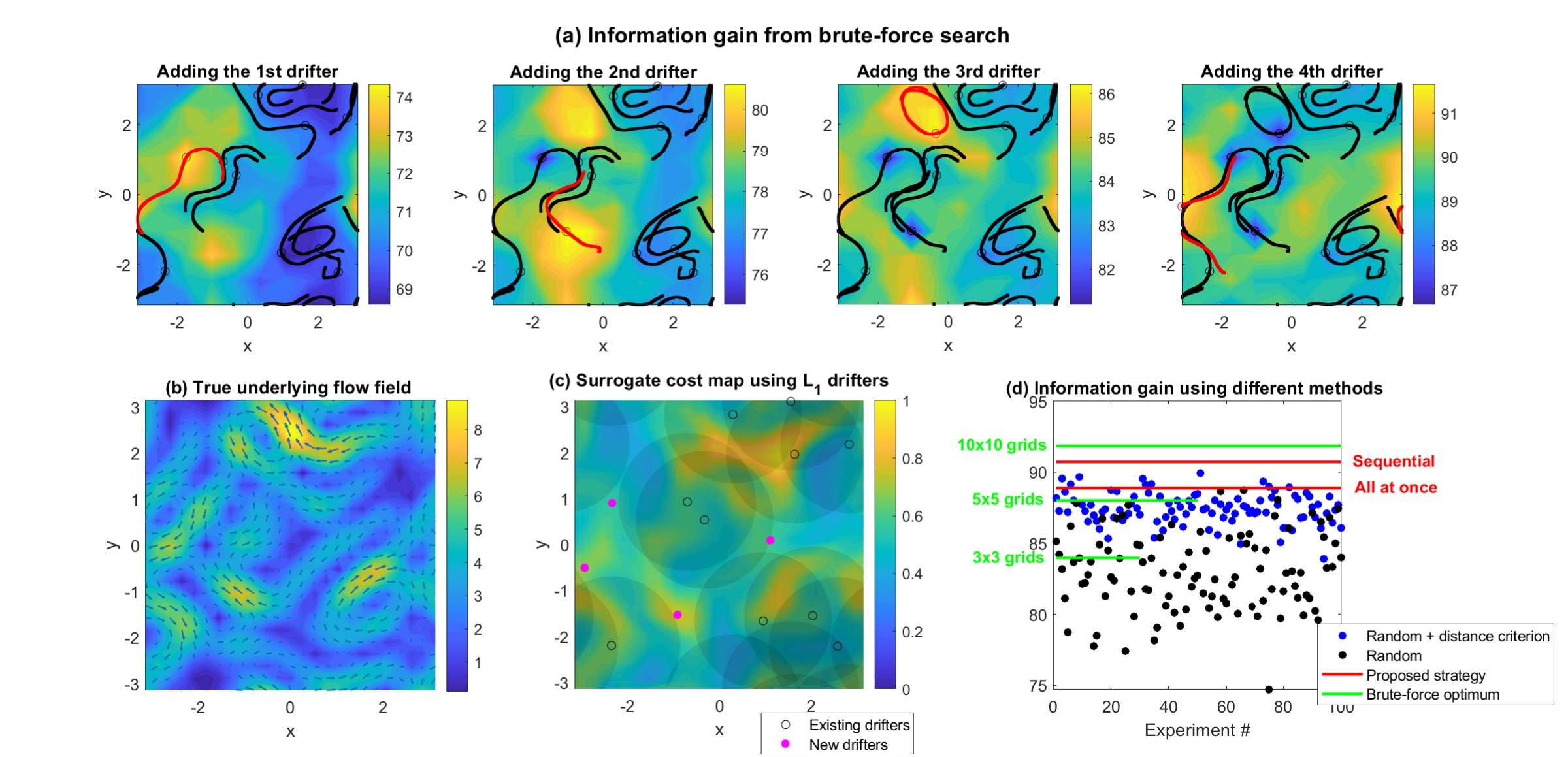}
    \caption{Comparison the information gain of different methods. Panel (a): using brute-force search algorithm to sequentially deploy $L_2=4$ drifters, one at each time. The contour shows the exact information gain computed from the relative entropy \eqref{Signal_Dispersion}. The red dot shows the location of the newly added drifter, which is at the location corresponding to the maximum information gain. Panel (b): the true flow field at $t^*=5$, where the color map shows the amplitude $\sqrt{u^2+v^2}$. Panel (c): the surrogate cost computed from the expected Lagrangian descriptor using the flow field recovered from $L_1$ drifters. The large shading circles indicate the use of distance criterion, where the minimum distance is $1.5$ units here. Panel (d): information gain using different methods. The three green lines represent the information gain using the brute-force search but with different numbers of mesh grids (namely different spatial resolutions). Note that the information gain for the red line and blue dots will have slight shift up- or down-wards if a slight different minimum distance is used as the criterion. But the red line (all-at-once strategy) always stays above most ($\sim90\%$) of the blue dots and the sequential strategy slightly outperforms the all-at-once strategy.  }
    \label{validation_fig}
\end{figure}

\begin{figure}[h]
    \centering\hspace*{0cm}
    \includegraphics[width=18cm]{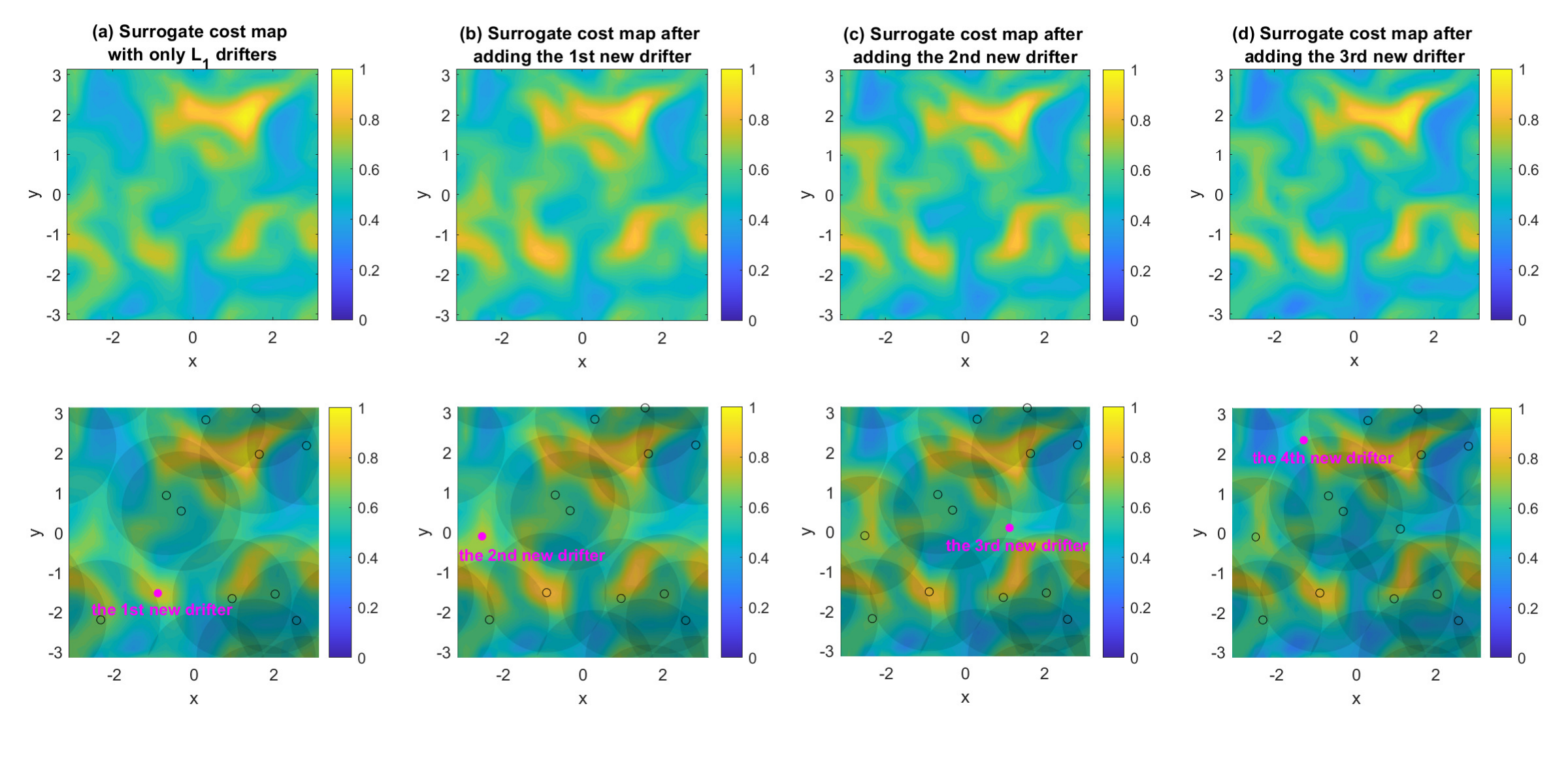}
    \caption{Surrogate cost maps by sequentially deploying the $L_2=4$ drifters. In each column, the top and bottom panels have the same contour plot, representing the surrogate cost. But the bottom panel also includes the locations of the drifters. The existing drifters are represented by black dots. Each time one new drifter is deployed and it is indicated by the red dot. The surrogate cost maps given by the expected Lagrangian descriptor are similar, but the uncertainty triggers some differences. Such differences will change the locations to deploy new drifters, which then helps improve the information gain.   }
    \label{LD_Sequential}
\end{figure}

\subsection{Determining the drifter discharging locations in the real-time state estimation scenario}
\subsubsection{Setup}
Now consider the real-time state estimation scenario. With $\mathbf{k}\in[-3,3]^2$ and in total $48$ modes, the parameters of the underlying dynamics are the same as those in \eqref{Parameters_eddy_model} except $\sigma_\mathbf{k}$, which is reduced to $\sigma_\mathbf{k}=0.125$ such that the trajectories will not travel too far away. There are in total $L_1=10$ drifters existing in the field, which are uniformly distributed in the domain at the initial time instant. Their trajectories from $t=0$ to $t=T=2$ are available. The additional $L_2=4$ drifters are discharged at $t=2$ aiming to improve the state estimation for a future period $t\in[2,2.5]$, which is also the interval used to compute the Lagrangian descriptor. Following the general procedure described in Section \ref{Subsec:Procedure}, an ensemble of the forecast realizations of the flow field and the associated drifter trajectories is computed. The ensemble size is $\mbox{Ens} = 20$. The minimum distance between the new drifter and the existing ones is set to be $1$.

\subsubsection{A numerical experiment}
Panel (a) of Figure \ref{LD_forecast} shows the truth and the estimated time series of mode $(-3,-3)$. The cyan and blue curves from $t=0$ to $t=2$ represent the true signal and the posterior mean from Lagrangian DA. The posterior mean roughly follows the truth, but the uncertainty, indicated by the shading area, is non-negligible. The resulting posterior distribution at $t=2$ is used as the initial value for forecasting the flow model. As is expected, the forecast uncertainty increases from $t=2$ to $t=2.5$. The twenty thin blue and red curves from $t=2$ to $t=2.5$ are the ensemble members of the forecast trajectories and the posterior mean when the $L_2$ additional drifters are placed at the locations determined by the surrogate cost function at $t=2$. The initial value of each random realization is drawn from the posterior distribution at $t=2$, and then the model \eqref{OU_process} is run forward. The initial value of the Lagrangian DA is simply the posterior distribution at $t=2$. One of these realizations is shown in Panel (d), and the associated time evolution of the posterior variance is shown in Panel (e). It is seen that the posterior distribution has a rapid response when the new drifters are placed. Despite the possible initial gap between the posterior mean and the true signal, the posterior mean adjusted quickly towards the posterior mean. The posterior uncertainty also experiences a sharp drop corresponding to the uncertainty reduction with the additional $L_2$ drifters.

Panel (b) of Figure \ref{LD_forecast} shows the map of the surrogate cost function computed from the expected Lagrangian descriptor \eqref{LD_VelocityBased_Expectation}. The blue dots indicate the existing $L_1$ drifters, while the red dots are the locations of the $L_2$ additional drifters at $t=2$ determined by the locations of the maximum values of the expected Lagrangian descriptor and the distance criterion. Panel (c) shows the flow field at $t=2$ and the predicted trajectories of the drifters from $t=2$ to $t=2.5$. These trajectories move in a similar direction but are separate enough, indicating the forecast uncertainty. Among the four newly added drifters, drifter \#1 is placed where the velocity field is the strongest in the domain. This drifter will travel a long distance over the forecast horizon. Drifters \#2 and \#3 are placed where the local flow velocity is relatively strong after excluding the areas close to the existing drifters. In contrast, drifter \#4 is discharged at a location that has a significant uncertainty, which is caused by no drifters in the nearby areas. Therefore, a new drifter placed in such a place will help collect the local information and reduce the overall uncertainty.

Figure \ref{ensemble_members} illustrates the surrogate cost maps associated with each of the twenty realizations of the forecast flow field in the case presented in Figure \ref{LD_forecast}. Despite the apparent discrepancy between different surrogate cost maps due to the uncertainty, these maps share some common features. For example, the surrogate cost around the area discharging the new drifter \#1 is always significant. These coherent structures are crucial for the method to remain skillful for helping the state estimation based on a single forecast realization of the flow field as in reality. Otherwise, the uncertainty will dominate the map, and the underlying features of the flow field will not play a vital role in determining the locations for deploying the new drifters.

\begin{figure}[h]
    \centering\hspace*{0cm}
    \includegraphics[width=18cm]{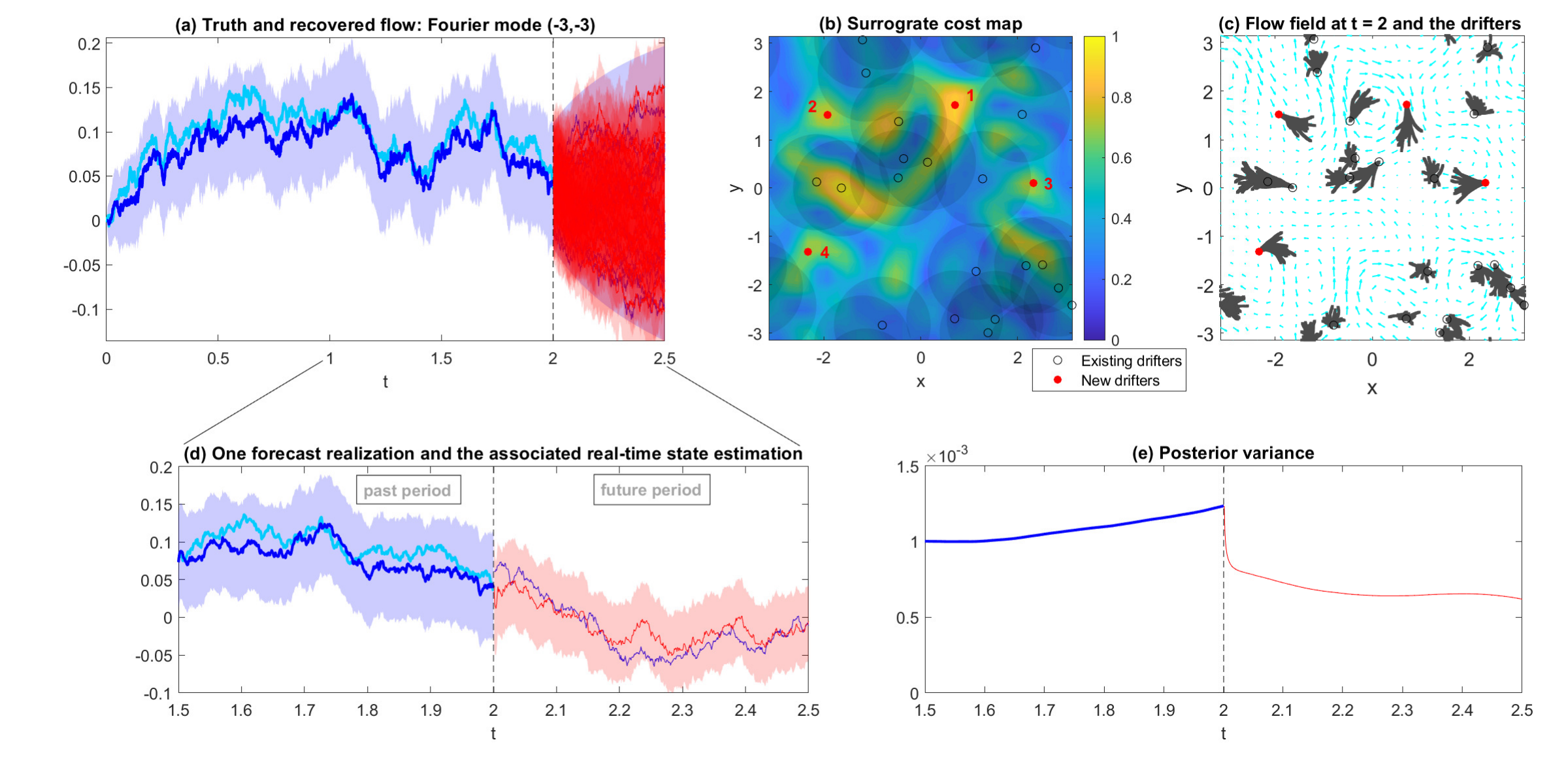}
    \caption{Determining the locations of discharging drifters in the real-time state estimation scenario. The parameters are the same as those in \eqref{Parameters_eddy_model} except $\sigma_\mathbf{k}=0.125$. There are, in total, $L_1=10$ drifters existing in the field. Their trajectories from $t=0$ to $t=T=2$ are available. The additional $L_2=4$ drifters are discharged at $t=2$, aiming to improve the state estimation for a future period $t\in[2,2.5]$. Panel (a): the truth and the estimated time series of mode $(-3,-3)$. The cyan curve from $t=0$ to $t=2$ is the true signal (only the real-part is shown). The blue curve within the same interval is the posterior mean from Lagrangian DA (again only the real-part is shown). The blue shading area shows the two standard deviations from the mean, representing the uncertainty in the Lagrangian DA, where the standard deviation is the square root of the posterior variance of this mode. The uncertainty of the model ensemble forecast from $t=2$ to $t=2.5$ is shown in the blue shading area, where the posterior distribution at $t=2$ serves as the initial condition of running the forecast model \eqref{OU_process}. The twenty thin blue curves from $t=2$ to $t=2.5$ are the ensemble members of the forecast trajectories, and the red curves and the red shading areas are the posterior mean and two standard deviations when the $L_2$ additional drifters are placed at the locations determined by the surrogate cost function at $t=2$. Panels (d) is a zoom-in illustration showing one of the twenty realizations. The associated time evolution of the posterior variance is shown in Panel (e). The contour plot in Panel (b) shows the map of the surrogate cost function computed from the expected Lagrangian descriptor \eqref{LD_VelocityBased_Expectation}. The blue dots indicate the existing $L_1$ drifters, while the red dots are the locations of the $L_2$ additional drifters at $t=2$. Panel (c) shows the flow field at $t=2$ and the predicted trajectories of the drifters from $t=2$ to $t=2.5$.}
    \label{LD_forecast}
\end{figure}

\begin{figure}[h]
    \centering\hspace*{0cm}
    \includegraphics[width=19cm]{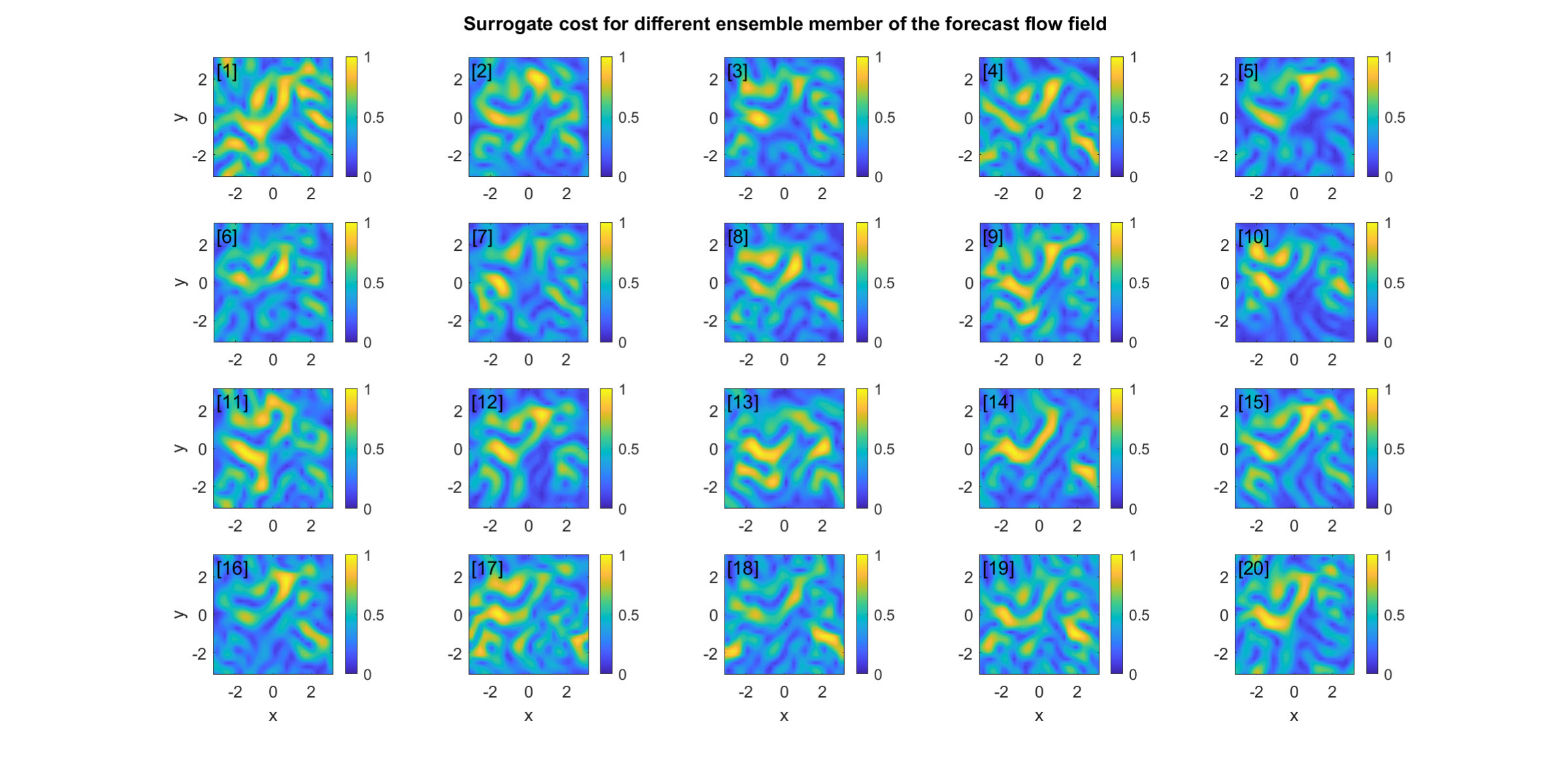}
    \caption{Surrogate cost maps associated with each of the twenty realizations of the forecast flow field in the case presented in Figure \ref{LD_forecast}.  }
    \label{ensemble_members}
\end{figure}

\subsubsection{Statistical analysis}
The previous subsection illustrates the procedure for one single experiment. To systematically study the skill of the proposed strategy, Figure \ref{Comparison_ensemble} includes the results of 100 sets of independent experiments. In each experiment, the true flow field from $t=0$ to $t=2$ is randomly generated. The averaged information gain based on $\mbox{Ens}=20$ forecast realizations when the drifters are deployed using the proposed strategy is shown in red. It is compared with the averaged information gain when the drifters are discharged randomly based on the same number of $\mbox{Ens}=20$ forecast realizations, and the result is shown in one blue dot. Such a test is carried out for 30 independent groups of random assignments, providing 30 blue dots for each experiment. Panel (a) shows the results of the random assignments without considering the distance criterion. The information gain from the proposed strategy outperforms almost all these random trials by a significant amount. Panel (b) includes the results by considering the distance criterion in the random assignments. The improved results from the random trials again justified the necessity of incorporating the distance criterion, which has been validated by the causal analysis. It is also seen that the overall uncertainty in each experiment, namely the difference in the information gain between the random trials, shrinks, which is as expected. Nevertheless, the proposed strategy leads to overall higher information gain than the random assignments, which implies the significance of using the Lagrangian descriptor beyond simply applying the distance criterion.

\begin{figure}[h]
    \centering\hspace*{0cm}
    \includegraphics[width=18cm]{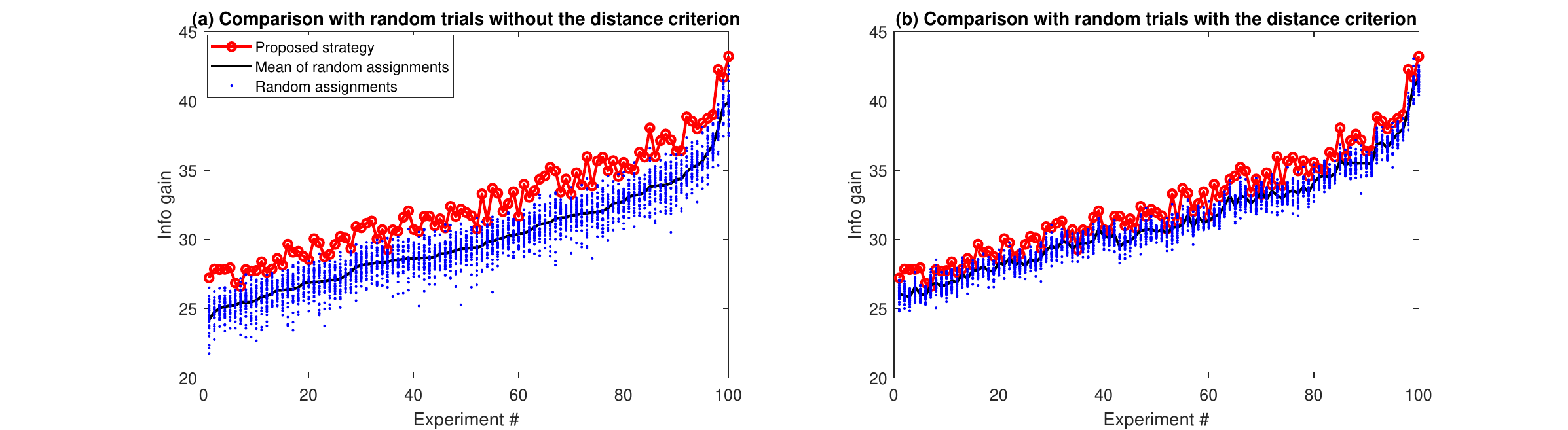}
    \caption{Comparison of the averaged information gain \eqref{Signal_Dispersion} based on $\mbox{Ens}=20$ forecast realizations using the proposed strategy (i.e., by determining using the surrogate cost function) and the one by randomly deploying the drifters. Panel (a) shows the results of the random assignment without considering the distance criterion, while Panel (b) includes the distance criterion. Each panel shows 100 sets of independent experiments. In each experiment, the true flow field from $t=0$ to $t=2$ is randomly generated, which is different from other sets of experiments. The blue dots show the information gain from 30 different random trials of assigning drifters in each experiment. The black color indicates the mean. The red dot shows the information gain using the proposed strategy. For illustration purposes, the experiments in Panel (a) are re-ordered according to the mean of the 30 random trials. The same order is used in Panel (b) such that the trend of the red dots remains the same. }
    \label{Comparison_ensemble}
\end{figure}

It is worth highlighting again that the optimization problem is formed by maximizing the averaged information gain over an ensemble of forecast realizations. The motivation is that the truth of the future realization is unknown, and it is natural to consider the expectation of the information gain. The above study has validated that the proposed strategy succeeds in finding a nearly optimal solution that leads to a large information gain. However, in reality, the truth is given by one single trajectory. Therefore, it is helpful to see whether deploying the drifters at the locations determined by maximizing the expected information gain remains skillful for a random single forecast realization. Figure \ref{Comparison_single} includes such a study. Panel (a) shows that the proposed strategy still outperforms the method by randomly deploying the drifters by an apparent amount overall. Although the advantage of the proposed strategy shrinks when the distance criterion is further included in the random assignments, the proposed strategy still wins 77\% of the experiments in such a tough test; namely, the information gain using the proposed strategy is larger than the mean of those from the random assignments. Note that since the truth of the future is unknown, applying a large number of random assignments based on a single trajectory and selecting the locations that correspond to the maximum of the information gain among these random trials may not necessarily remain informative for the true realization. The proposed strategy provides a systematic and reliable way to determine the locations of deploying the additional drifters.

\begin{figure}[h]
    \centering\hspace*{0cm}
    \includegraphics[width=18cm]{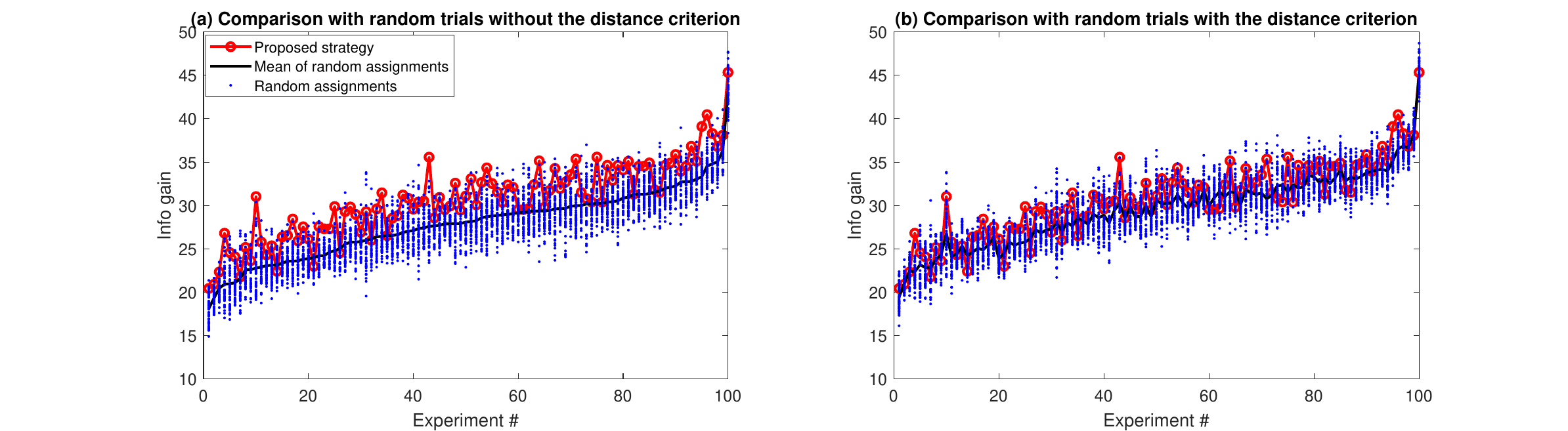}
    \caption{Similar to Figure \ref{Comparison_ensemble} but the information gain is based on only one forecast realization. The number of random trials is also increased to $50$ in each experiment. Note that an ensemble of the forecast realizations is still used to determine the locations for deploying the drifters at $t=2$ in the proposed strategy. This study mimics the realistic situation since the true future trajectory is only one of the realizations of the underlying system. }
    \label{Comparison_single}
\end{figure}

Figure \ref{Comparison_skill_score} shows a skill score of the proposed strategy in comparison with the random trials based on the results shown in Figures \ref{Comparison_ensemble} and \ref{Comparison_single}. Recall that, for each experiment (with one true realization), a number of random tests (30 in Figure \ref{Comparison_ensemble} and 50 in Figure \ref{Comparison_single}) have been carried out. In addition to the mean of each random test, the percentiles can also be computed. In this figure, the x-axis shows different prescribed percentiles of the information gain from the random trials. The y-axis shows the percentage of the random trails with a smaller information gain than the proposed strategy. If the proposed strategy gives an indistinguishable result from the random trials, then the value should be located in the dashed black threshold line. It is seen from the blue and the red curves that the proposed strategy outperforms the mean of the random tests (50\% on the x-axis) in 100\% and 97\% of the experiments without and with the distance criterion in the random assignments, respectively, when comparing the expected information gain. The numbers remain at 98\% and 79\% when comparing the 95 percentiles of the random trials (the last point in the two curves). When focusing on the information gain for a single realization (green and purple curves), the results become worse, as expected. Nevertheless, the green and the purple curves always stay above the dashed black threshold line, implying that the proposed strategy is always more skillful than random assignments, even with the distance criterion. The information gain associated with the proposed strategy in 95\% and 77\% experiments is higher than those using the random assignments without and with the distance criterion. The number remains at 44\% and 18\% when compared with the 95 percentiles of the random experiments.

\begin{figure}[h]
    \centering\hspace*{0cm}
    \includegraphics[width=14cm]{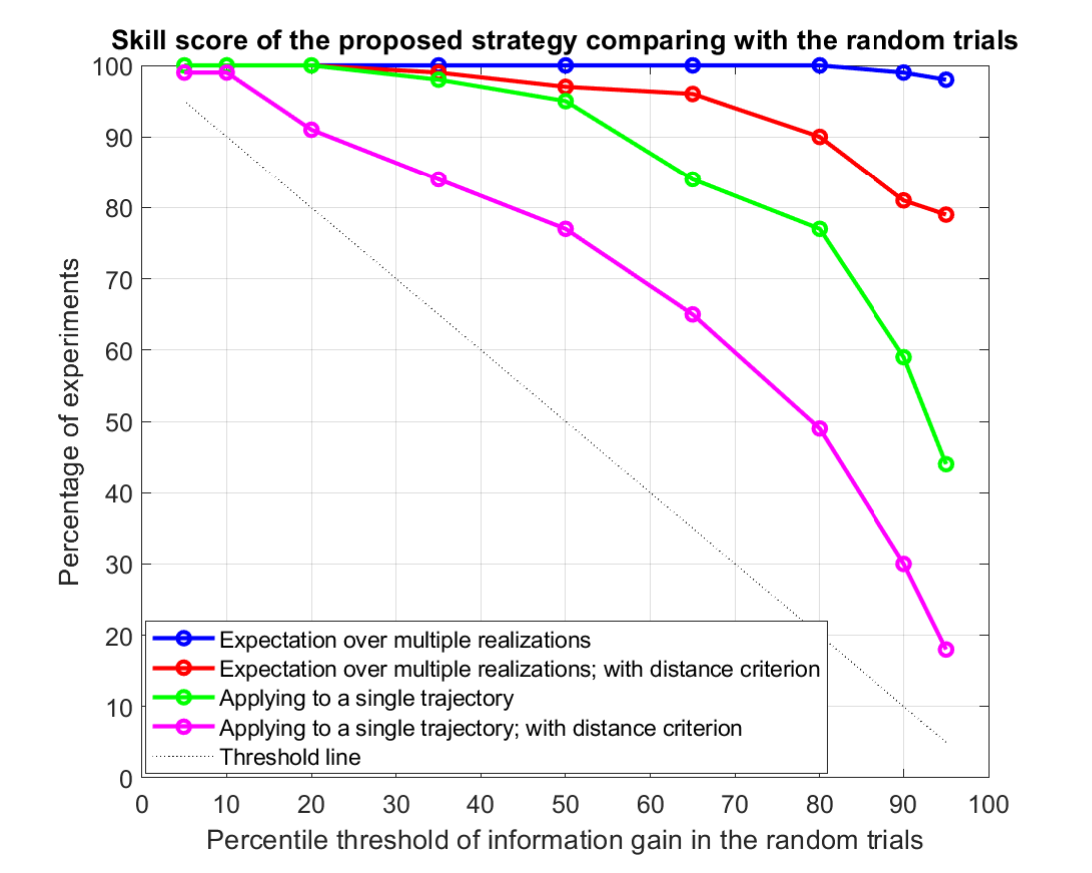}
    \caption{A skill score of the proposed strategy in comparison with the random trials based on the results shown in Figures \ref{Comparison_ensemble} and \ref{Comparison_single}. The x-axis shows different prescribed percentiles of the information gain from the random trials. The y-axis shows the percentage of the random trails with a smaller information gain than the proposed strategy. If the proposed strategy gives an indistinguishable result from the random trials, then the value should be located in the dashed black threshold line.  }
    \label{Comparison_skill_score}
\end{figure}

\section{Discussions and Conclusions}\label{Sec:Conclusion}
\subsection{Sources of uncertainty}
This paper highlights the importance of uncertainty in designing the strategy for discharging Lagrangian drifters. One dominant source of the uncertainty comes from the estimated state using Lagrangian DA. This contrasts with using only the mean of the state estimation, which is usually the output of the standard reanalysis data. If the uncertainty is significant, then the mean time series may lack sufficient information.

The model used in this paper is assumed to be a perfect model, which means the same model is used to generate the true signal, act as the forecast model for Lagrangian DA, and predict the future flow field. Since the model is stochastic (or, in general, turbulent), its realizations with slightly different initial values or random forcing quickly become separate. Thus, statistics are crucial for characterizing the model properties. Regarding the observations, the observational noise level (e.g., the observational noise coefficient $\sigma_{\mathbf{x}}$) is also assumed to be known. In addition, the solution of the Lagrangian DA is exact and given by closed analytic formulae. Therefore, in the reanalysis scenario, the uncertainty solely comes from the state estimation via the Bayesian inference. In the forecast scenario, a second source of uncertainty comes from the intrinsic predictability limit of a turbulent system when the model forecast is carried out.

In applications there are other potential sources of uncertainties. One or some of these uncertainties may appear in a specific situation. Unlike the intrinsic uncertainty from state estimation and forecast that can be handled systematically, as was studied in this paper, these additional uncertainties usually need to be dealt with case by case. First, several additional sources of uncertainties may appear in the models. Since reduced-order models are often adopted in practice to improve computational efficiency, the model error due to coarse-graining, stochastic parameterizations, and imperfect knowledge of physics in these approximate models account for the extra uncertainties \cite{majda2018model}, not only in the DA but also in the forecast. Second, uncertainty may also result from estimating model parameters. In this paper the exact parameters are known. Yet, if the parameter estimation needs to be carried out simultaneously with the DA, then expectation-maximization algorithms have to be applied. The resulting state estimation naturally contains more uncertainty. See the studies in \cite{chen2023uncertainty}. Third, ensemble DA is often needed for more complicated operational systems. Additional uncertainty will naturally come from the sampling error due to insufficient ensemble sizes and the necessary tuning processes, such as the localization and covariance inflation \cite{evensen2000ensemble}. Finally, uncertainty also exists in estimating the observational noise level, which is related to the representation error and may affect the DA skill \cite{janjic2018representation, zeng2018representation}. Studying the effect of these additional uncertainty sources in affecting the drifter deployment can be an interesting topic for future work.

Besides these uncertainties, when the posterior or the model equilibrium distributions are non-Gaussian, applying the Gaussian approximation in \eqref{Signal_Dispersion} for computing the relative entropy may lead to errors. However, since computing the information gain based on the exact formula of the relative entropy in \eqref{Causation_Entropy_KL_Form_new} requires calculating a high-dimensional integration and suffers from the curse of dimensionality, the Gaussian or other approximations that facilitate the evaluation of the information gain become essential. Understanding such approximation errors in determining drifter locations is also practically important.

\subsection{The solvability of the problem}
Recall that the locations for discharging the additional $L_2$ drifters are determined by the map of the surrogate cost function, which is computed based on an ensemble of the forecast realizations of the flow field. In Figure \ref{Comparison_single}, it has been shown that these drifters still provide skillful results for the state estimation of a single forecast realization.

One crucial reason for such a promising result is that the ensemble realizations contain the essential information of a single trajectory. This can be seen from the maps associated with each individual ensemble member shown in Figure \ref{ensemble_members}, which share some common features as the averaged map in Panel (b) of Figure \ref{LD_forecast}. These facts provide valuable evidence to answer a question: When is the strategy determined by the ensemble realizations skillful for a single realization? If the forecast trajectories are fully turbulent and lose predictability, for example, the forecast horizon being much longer than the decorrelation time of the system or the initial uncertainty being too large, then the map of the cost function will be nearly homogeneous, which indicates the uncertainty being equally strong everywhere. In such a situation, the considerable uncertainty makes the problem unsolvable. This means a sufficient number of existing drifters is essential to help reduce the uncertainty and provide helpful guidance for discharging additional drifters.

\subsection{Other application scenarios}
In this paper, the information gain is characterized over the entire flow field. The same procedure can be applied to maximize the gain of the local information by calculating the total cost function map, where the cost is computed based on the pre-determined local area. The strategy can also be applied to maximize the information gain at a specific future time instant instead of the averaged one within a period. In addition, the uncertainty in this work is given by the state estimation using the $L_1$ existing drifters from the Lagrangian DA. Yet, the uncertainty can come from other state estimation methods using either the available Eulerian or Lagrangian observations.

\section*{Acknowledgement}
The research of N.C. is funded by ARO W911NF-23-1-0118. S.W. acknowledges the financial support provided by the EPSRC Grant No. EP/P021123/1 and the support of the William R. Davis '68 Chair in the Department of Mathematics at the United States Naval Academy. The research of E.B. is supported by  the ONR, ARO, DARPA
RSDN, and the NIH and NSF under CRCNS.

\section*{Appendix}
\subsection*{Smoothing and sampling in Lagrangian data assimilation }
With the filtering solution \eqref{eq:filter} in hand, closed analytic formulae are also available for the smoothing solution.

\begin{proposition}[Posterior distribution of Lagrangian data assimilation: Smoothing]\label{Prop:Smoothing}
Given one realization of the drifter trajectories $\mathbf{X}(t)$ for $t\in[0,T]$, the smoother estimate $p(\mathbf{U}(t)|\mathbf{X}(s), s\in[0,T])\sim\mathcal{N}(\boldsymbol\mu_\mathbf{s}(t),\mathbf{R}_\mathbf{s}(t))$ of the coupled system is also Gaussian,
where the conditional mean $\boldsymbol\mu_\mathbf{s}(t)$ and conditional covariance $\mathbf{R}_\mathbf{s}(t)$ of the smoother satisfy the following backward equations
\begin{subequations}\label{Smoother_Main}
\begin{align}
  \frac{\overleftarrow{\d \boldsymbol{\mu}_\mathbf{s}}}{\d t} &=  -\mathbf{F}_\mathbf{U} - \boldsymbol\Lambda\boldsymbol{\mu}_\mathbf{s}  + (\boldsymbol{\Sigma}_\mathbf{U}\boldsymbol{\Sigma}_\mathbf{U}^*)\mathbf{R}^{-1}(\boldsymbol\mu - \boldsymbol{\mu}_\mathbf{s}),\label{Smoother_Main_mu}\\
  \frac{\overleftarrow{\d \mathbf{R}_\mathbf{s}}}{\d t} &= - (\boldsymbol\Lambda + (\boldsymbol{\Sigma}_\mathbf{U}\boldsymbol{\Sigma}_\mathbf{U}^*) \mathbf{R}^{-1})\mathbf{R}_\mathbf{s} - \mathbf{R}_\mathbf{s}(\boldsymbol\Lambda^* + \mathbf{R}^{-1}(\boldsymbol{\Sigma}_\mathbf{U}\boldsymbol{\Sigma}_\mathbf{U}^*))  + \boldsymbol{\Sigma}_\mathbf{U}\boldsymbol{\Sigma}_\mathbf{U}^* ,\label{Smoother_Main_R}
\end{align}
\end{subequations}
with $\boldsymbol\mu$ and $\mathbf{R}$ being given by \eqref{eq:filter}. The notation $\overleftarrow{\d \cdot}/\d t$ corresponds to the negative of the usual derivative, which means that the system \eqref{Smoother_Main} is solved backward over $[0,T]$ with the starting value of the nonlinear smoother being the same as the filter estimate $(\boldsymbol\mu_\mathbf{s}(T), \mathbf{R}_\mathbf{s}(T)) = (\boldsymbol\mu(T), \mathbf{R}(T))$.
\end{proposition}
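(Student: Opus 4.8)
The plan is to reduce the claim to the linear--Gaussian structure already established for the filter and then to produce the backward system \eqref{Smoother_Main} by a continuous-time Rauch--Tung--Striebel (forward filter, backward pass) argument. The first step is to fix a realization of the path $\mathbf{X}(s)$, $s\in[0,T]$, and observe that the matrix $\mathbf{A}(\mathbf{X}(t),t)$ entering \eqref{eq:cgns_X} then becomes a known, deterministic function of $t$. Consequently, \emph{conditioned on} $\mathbf{X}_{[0,T]}$, the hidden process $\mathbf{U}$ obeys the time-inhomogeneous linear dynamics \eqref{eq:cgns_U} driven by additive Gaussian noise, so every finite-dimensional conditional law of $\mathbf{U}$ given $\mathbf{X}_{[0,T]}$ is Gaussian; in particular $p(\mathbf{U}(t)\mid\mathbf{X}(s),s\in[0,T])$ is Gaussian and it remains only to identify its mean $\boldsymbol\mu_\mathbf{s}(t)$ and covariance $\mathbf{R}_\mathbf{s}(t)$. (The same conditional-Gaussian observation underlies the filtering Proposition.)

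Next I would set up the backward recursion. By the Markov property of $(\mathbf{X},\mathbf{U})$, for $t<t+h\le T$ we have $p(\mathbf{U}(t)\mid\mathbf{X}_{[0,T]})=\int p(\mathbf{U}(t)\mid\mathbf{U}(t+h),\mathbf{X}_{[0,t]})\,p(\mathbf{U}(t+h)\mid\mathbf{X}_{[0,T]})\,\d\mathbf{U}(t+h)$, so the smoother at time $t$ is obtained by pushing the smoother at time $t+h$ through the backward transition kernel $p(\mathbf{U}(t)\mid\mathbf{U}(t+h),\mathbf{X}_{[0,t]})$. This kernel is Gaussian, since both the one-step forward transition of \eqref{eq:cgns_U} and the filter $p(\mathbf{U}(t)\mid\mathbf{X}_{[0,t]})=\mathcal{N}(\boldsymbol\mu(t),\mathbf{R}(t))$ from \eqref{eq:filter} are Gaussian: from the joint Gaussian law of $(\mathbf{U}(t),\mathbf{U}(t+h))$ given $\mathbf{X}_{[0,t]}$ one reads off the smoother gain $\mathbf{J}(t)=\mathbf{R}(t)\,(\mathbf{I}+h\boldsymbol\Lambda)^{*}\,\mathbf{R}_{\mathrm{pred}}(t+h)^{-1}$, with $\mathbf{R}_{\mathrm{pred}}(t+h)=\mathbf{R}(t)+h(\boldsymbol\Lambda\mathbf{R}(t)+\mathbf{R}(t)\boldsymbol\Lambda^{*}+\boldsymbol\Sigma_\mathbf{U}\boldsymbol\Sigma_\mathbf{U}^{*})+o(h)$. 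Writing the discrete smoother updates $\boldsymbol\mu_\mathbf{s}(t)=\boldsymbol\mu(t)+\mathbf{J}(t)(\boldsymbol\mu_\mathbf{s}(t+h)-\boldsymbol\mu_{\mathrm{pred}}(t+h))$ and $\mathbf{R}_\mathbf{s}(t)=\mathbf{R}(t)+\mathbf{J}(t)(\mathbf{R}_\mathbf{s}(t+h)-\mathbf{R}_{\mathrm{pred}}(t+h))\mathbf{J}(t)^{*}$, expanding $\mathbf{J}(t)=\mathbf{I}-h(\boldsymbol\Lambda+\boldsymbol\Sigma_\mathbf{U}\boldsymbol\Sigma_\mathbf{U}^{*}\mathbf{R}^{-1})+o(h)$ and all quantities to first order in $h$, and passing to the limit $h\to0$ yields \eqref{Smoother_Main_mu}--\eqref{Smoother_Main_R}; the $\boldsymbol\Sigma_\mathbf{U}\boldsymbol\Sigma_\mathbf{U}^{*}$ contribution to $\mathbf{R}_{\mathrm{pred}}$ is exactly what produces the factors $(\boldsymbol\Sigma_\mathbf{U}\boldsymbol\Sigma_\mathbf{U}^{*})\mathbf{R}^{-1}$ in the limiting equations. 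The terminal condition is immediate: past $T$ there are no observations, hence the smoother at $T$ coincides with the filter, $(\boldsymbol\mu_\mathbf{s}(T),\mathbf{R}_\mathbf{s}(T))=(\boldsymbol\mu(T),\mathbf{R}(T))$.

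I expect the main obstacle to be making the $h\to0$ passage rigorous rather than formal: the gain contains $\mathbf{R}_{\mathrm{pred}}(t+h)^{-1}$, whose $O(h)$ part must be retained because it is precisely what survives in the limiting drift, so one has to track these corrections and, along the way, justify that $\mathbf{R}(t)$ remains positive definite (hence invertible) on all of $[0,T]$ --- which follows from the filter Riccati equation \eqref{eq:filter_R} under the standing assumptions. A cleaner route, and the one taken here, is to bypass the limiting computation altogether and appeal to the general closed-form smoother for conditional Gaussian nonlinear systems (see \cite{liptser2013statistics, chen2018conditional}): the coupled system \eqref{eq:cgns} is a special case, with observation coupling $\mathbf{A}(\mathbf{X},t)$, linear hidden drift $\mathbf{F}_\mathbf{U}+\boldsymbol\Lambda\mathbf{U}$, and constant noise covariances $\sigma_\mathbf{x}^{2}\mathbf{I}$ and $\boldsymbol\Sigma_\mathbf{U}\boldsymbol\Sigma_\mathbf{U}^{*}$, and specializing that result gives \eqref{Smoother_Main} directly together with the terminal value $(\boldsymbol\mu_\mathbf{s}(T),\mathbf{R}_\mathbf{s}(T))=(\boldsymbol\mu(T),\mathbf{R}(T))$.
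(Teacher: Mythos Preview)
Your proposal is correct and, in the end, takes essentially the same approach as the paper: the paper's ``proof'' consists solely of a pointer to the literature (specifically \cite{chen2020learning}), and your final paragraph does the same thing, recognizing \eqref{eq:cgns} as a special case of the general conditional Gaussian smoother and reading off \eqref{Smoother_Main} from the known closed-form result. The difference is that you also supply a self-contained Rauch--Tung--Striebel derivation (discretize, compute the backward gain, expand to first order in $h$, pass to the limit), which the paper omits entirely; this adds genuine content and makes the result more transparent, at the cost of the extra care you correctly flag around the $h\to0$ limit and the invertibility of $\mathbf{R}(t)$. One minor point: the paper cites \cite{chen2020learning} for this smoother, whereas you cite \cite{liptser2013statistics, chen2018conditional}; all are appropriate, but if you want to match the paper you should point to \cite{chen2020learning}.
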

\begin{proof}
The proof can be found in
 \cite{chen2020learning}.
\end{proof}

The smoother estimate \eqref{Smoother_Main} provides a PDF at each time instant for the recovered velocity field, which includes the uncertainty. Given these PDFs and the temporal dependence, an efficient sampling algorithm of the time series of the velocity field $\mathbf{U}$ from the posterior distributions can be developed. The sampled time series of the velocity field is utilized to compute the Lagrangian descriptor with uncertainty discussed in \cite{chen2023launching} to find the appropriate locations for deploying drifters in the reanalysis scenario.

\begin{proposition}[Sampling trajectories from posterior distributions]\label{Prop:Sampling}
Based on the smoother estimate, an optimal backward sampling of the trajectories associated with the unobserved variable $\mathbf{U}$ satisfies the following explicit formula,
\begin{equation}\label{Sampling_Main}
  \frac{\overleftarrow{\d \mathbf{U}}}{\d t} = \frac{\overleftarrow{\d \boldsymbol\mu_\mathbf{s}}}{\d t} - \big(\boldsymbol\Lambda + (\boldsymbol{\Sigma}_\mathbf{U}\boldsymbol{\Sigma}_\mathbf{U}^*)\mathbf{R}^{-1}\big)(\mathbf{U} - \boldsymbol\mu_\mathbf{s}) + \boldsymbol{\Sigma}_\mathbf{U}\dot{\mathbf{W}}_{\mathbf{U}}(t).
\end{equation}
\end{proposition}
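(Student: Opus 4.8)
The plan is to obtain \eqref{Sampling_Main} as the continuous-time limit of a forward-filtering/backward-sampling recursion, leaning entirely on the conditional Gaussianity of \eqref{eq:cgns} together with the already-established filter \eqref{eq:filter} and smoother \eqref{Smoother_Main}. The starting point is the structural observation that, conditioned on the whole observed path $\mathbf{X}(s\in[0,T])$, the signal $\mathbf{U}$ is a Gaussian process which, read backward in time, is Markov: this is inherited from the Markov property of the joint system $(\mathbf{X},\mathbf{U})$ plus the fact that, once $\mathbf{U}(t+\Delta)$ is known, the future observations $\mathbf{X}(s\in[t,T])$ carry no further information about $\mathbf{U}(t)$. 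Hence the conditional law of the trajectory is completely determined by (i) the terminal marginal, which is the filter Gaussian $\mathcal N(\boldsymbol\mu(T),\mathbf{R}(T))$ by Proposition~\ref{Prop:Smoothing}, and (ii) the one-step backward transition kernels $p(\mathbf{U}(t)\,|\,\mathbf{U}(t+\Delta),\mathbf{X}(s\le T))=p(\mathbf{U}(t)\,|\,\mathbf{U}(t+\Delta),\mathbf{X}(s\le t))$.

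Next I would compute that backward kernel. By Bayes' rule it is proportional to $p(\mathbf{U}(t+\Delta)\,|\,\mathbf{U}(t))\,p(\mathbf{U}(t)\,|\,\mathbf{X}(s\le t))$, i.e. the product of the Euler--Maruyama forward transition of \eqref{eq:cgns_U}, namely $\mathcal N\big((I+\boldsymbol\Lambda\Delta)\mathbf{U}(t)+\mathbf{F}_\mathbf{U}\Delta,\ \boldsymbol\Sigma_\mathbf{U}\boldsymbol\Sigma_\mathbf{U}^*\Delta\big)$, with the filter marginal $\mathcal N(\boldsymbol\mu(t),\mathbf{R}(t))$. Both factors are Gaussian, so the conditional law of $\mathbf{U}(t)$ given $\mathbf{U}(t+\Delta)$ is Gaussian, and a standard Gaussian-conditioning (Kalman-update) calculation, expanded to first order in $\Delta$, yields increment mean $\mathbf{U}(t)-\mathbf{U}(t+\Delta)=-\big[\,\mathbf{F}_\mathbf{U}+\boldsymbol\Lambda\mathbf{U}(t+\Delta)+\boldsymbol\Sigma_\mathbf{U}\boldsymbol\Sigma_\mathbf{U}^*\mathbf{R}(t)^{-1}(\mathbf{U}(t+\Delta)-\boldsymbol\mu(t))\,\big]\Delta+o(\Delta)$ and increment covariance $\boldsymbol\Sigma_\mathbf{U}\boldsymbol\Sigma_\mathbf{U}^*\Delta+o(\Delta)$. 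Letting $\Delta\to0$ identifies the backward dynamics as the linear It\^o SDE $\overleftarrow{\d\mathbf{U}}/\d t=-\mathbf{F}_\mathbf{U}-\boldsymbol\Lambda\mathbf{U}-\boldsymbol\Sigma_\mathbf{U}\boldsymbol\Sigma_\mathbf{U}^*\mathbf{R}^{-1}(\mathbf{U}-\boldsymbol\mu)+\boldsymbol\Sigma_\mathbf{U}\dot{\mathbf{W}}_{\mathbf{U}}$. Substituting the smoother mean equation \eqref{Smoother_Main_mu}, which rearranges to $-\mathbf{F}_\mathbf{U}-\boldsymbol\Lambda\boldsymbol\mu_\mathbf{s}+\boldsymbol\Sigma_\mathbf{U}\boldsymbol\Sigma_\mathbf{U}^*\mathbf{R}^{-1}(\boldsymbol\mu-\boldsymbol\mu_\mathbf{s})=\overleftarrow{\d\boldsymbol\mu_\mathbf{s}}/\d t$, and regrouping $\mathbf{U}-\boldsymbol\mu=(\mathbf{U}-\boldsymbol\mu_\mathbf{s})-(\boldsymbol\mu-\boldsymbol\mu_\mathbf{s})$, converts this into precisely the claimed form \eqref{Sampling_Main}.

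As an independent consistency check I would verify directly that the process defined by \eqref{Sampling_Main}, run backward from $\mathbf{U}(T)\sim\mathcal N(\boldsymbol\mu(T),\mathbf{R}(T))$, has the correct one-time statistics. Taking expectations kills the zero-mean term $-(\boldsymbol\Lambda+\boldsymbol\Sigma_\mathbf{U}\boldsymbol\Sigma_\mathbf{U}^*\mathbf{R}^{-1})(\mathbf{U}-\boldsymbol\mu_\mathbf{s})$, so $\mathbb E[\mathbf{U}(t)]$ solves the same backward ODE \eqref{Smoother_Main_mu} with the same terminal value $\boldsymbol\mu(T)=\boldsymbol\mu_\mathbf{s}(T)$, forcing $\mathbb E[\mathbf{U}]\equiv\boldsymbol\mu_\mathbf{s}$; then propagating $\mathbf{P}(t):=\mathrm{Var}(\mathbf{U}(t))$ through the linear SDE gives the backward Lyapunov equation $\overleftarrow{\d\mathbf{P}}/\d t=-(\boldsymbol\Lambda+\boldsymbol\Sigma_\mathbf{U}\boldsymbol\Sigma_\mathbf{U}^*\mathbf{R}^{-1})\mathbf{P}-\mathbf{P}(\boldsymbol\Lambda^*+\mathbf{R}^{-1}\boldsymbol\Sigma_\mathbf{U}\boldsymbol\Sigma_\mathbf{U}^*)+\boldsymbol\Sigma_\mathbf{U}\boldsymbol\Sigma_\mathbf{U}^*$, which is exactly \eqref{Smoother_Main_R}, again with terminal value $\mathbf{P}(T)=\mathbf{R}(T)=\mathbf{R}_\mathbf{s}(T)$; so $\mathbf{P}\equiv\mathbf{R}_\mathbf{s}$ and the sampled marginals coincide with the smoother, as a valid posterior sampler must have.

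I expect the main obstacle to be the limiting argument in the second paragraph: making rigorous the passage from the discrete backward recursion to the backward It\^o SDE (weak convergence of the time-interpolated chain to the SDE solution, and the correct emergence of the \emph{filter} covariance $\mathbf{R}$ in the drift via the first-order expansion of the Kalman gain, not the smoother covariance), and cleanly justifying the reversed-time Markov/diffusion structure of the conditioned process, including that time-reversal leaves the diffusion coefficient equal to $\boldsymbol\Sigma_\mathbf{U}$. Everything else reduces to Gaussian identities and substitution of the already-proven formulae \eqref{eq:filter} and \eqref{Smoother_Main}.
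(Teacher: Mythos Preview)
Your derivation is correct. The paper itself does not give a proof: it simply cites \cite{chen2020learning}. Your argument is the standard forward-filtering/backward-sampling construction specialized to the conditionally Gaussian system \eqref{eq:cgns}, and the first-order expansion of the Gaussian backward kernel indeed produces the drift $-\mathbf{F}_\mathbf{U}-\boldsymbol\Lambda\mathbf{U}-\boldsymbol\Sigma_\mathbf{U}\boldsymbol\Sigma_\mathbf{U}^*\mathbf{R}^{-1}(\mathbf{U}-\boldsymbol\mu)$ and diffusion $\boldsymbol\Sigma_\mathbf{U}$; the algebraic substitution of \eqref{Smoother_Main_mu} to rewrite this in the $\boldsymbol\mu_\mathbf{s}$-centered form \eqref{Sampling_Main} is exactly right, as is your moment-matching check against \eqref{Smoother_Main}. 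This is almost certainly the same line of argument as in the cited reference, so there is no meaningful methodological difference to report. The caveats you flag about rigor (weak convergence of the discretized backward recursion to the backward SDE, and the justification of the reversed-time Markov property of the conditioned process) are genuine but standard for linear-Gaussian diffusions and do not indicate a gap in the approach.
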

\begin{proof}
The proof can be found in
 \cite{chen2020learning}.
\end{proof}

The temporal dependence in the sampled time series of $\mathbf{U}$ is extremely important. It contains the memory effect of the recovered velocity field, which is a crucial dynamical feature that affects the prediction of the Lagrangian trajectories $\mathbf{X}(t)$. The sampling approach in \eqref{Sampling_Main} fundamentally differs from drawing independent samples at different time instants, giving a noisy time series that lacks the physical properties of $\mathbf{U}$. The sampled trajectories is essential in the Lagrangian descriptor to help determine the location of placing drifters in the reanalysis scenario.

\bibliographystyle{plain}

\begin{thebibliography}{10}

\bibitem{almomani2020erfit}
Abd~AlRahman AlMomani and Erik Bollt.
\newblock Erfit: Entropic regression fit {MATLAB} package, for data-driven
  system identification of underlying dynamic equations.
\newblock {\em arXiv preprint arXiv:2010.02411}, 2020.

\bibitem{almomani2020entropic}
Abd AlRahman~R AlMomani, Jie Sun, and Erik Bollt.
\newblock How entropic regression beats the outliers problem in nonlinear
  system identification.
\newblock {\em Chaos: An Interdisciplinary Journal of Nonlinear Science},
  30(1), 2020.

\bibitem{apte2008data}
A~Apte, Christopher~KRT Jones, AM~Stuart, and Jochen Voss.
\newblock Data assimilation: {M}athematical and statistical perspectives.
\newblock {\em International journal for numerical methods in fluids},
  56(8):1033--1046, 2008.

\bibitem{apte2013impact}
A~Apte and CKRT Jones.
\newblock The impact of nonlinearity in {L}agrangian data assimilation.
\newblock {\em Nonlinear Processes in Geophysics}, 20(3):329--341, 2013.

\bibitem{apte2008bayesian}
Amit Apte, Christopher~KRT Jones, and AM~Stuart.
\newblock A {B}ayesian approach to {L}agrangian data assimilation.
\newblock {\em Tellus A: Dynamic Meteorology and Oceanography}, 60(2):336--347,
  2008.

\bibitem{berner2017stochastic}
Judith Berner, Ulrich Achatz, Lauriane Batte, Lisa Bengtsson, Alvaro
  De~La~Camara, Hannah~M Christensen, Matteo Colangeli, Danielle~RB Coleman,
  Daan Crommelin, Stamen~I Dolaptchiev, et~al.
\newblock Stochastic parameterization: {T}oward a new view of weather and
  climate models.
\newblock {\em Bulletin of the American Meteorological Society},
  98(3):565--588, 2017.

\bibitem{blunden2019look}
J~Blunden and DS~Arndt.
\newblock A look at 2018: {T}akeaway points from the state of the climate
  supplement.
\newblock {\em Bulletin of the American Meteorological Society},
  100(9):1625--1636, 2019.

\bibitem{branicki2013non}
Michal Branicki, Nan Chen, and Andrew~J Majda.
\newblock Non-gaussian test models for prediction and state estimation with
  model errors.
\newblock {\em Chinese Annals of Mathematics, Series B}, 34(1):29--64, 2013.

\bibitem{branicki2014quantifying}
Michal Branicki and AJ~Majda.
\newblock Quantifying {B}ayesian filter performance for turbulent dynamical
  systems through information theory.
\newblock {\em Communications in Mathematical Sciences}, 12(5):901--978, 2014.

\bibitem{branicki2012quantifying}
Michal Branicki and Andrew~J Majda.
\newblock Quantifying uncertainty for predictions with model error in
  non-{G}aussian systems with intermittency.
\newblock {\em Nonlinearity}, 25(9):2543, 2012.

\bibitem{branicki2018accuracy}
Michal Branicki, Andrew~J Majda, and Kody~JH Law.
\newblock Accuracy of some approximate {G}aussian filters for the
  {N}avier--{S}tokes equation in the presence of model error.
\newblock {\em Multiscale Modeling \& Simulation}, 16(4):1756--1794, 2018.

\bibitem{branstator2010two}
Grant Branstator and Haiyan Teng.
\newblock Two limits of initial-value decadal predictability in a {CGCM}.
\newblock {\em Journal of climate}, 23(23):6292--6311, 2010.

\bibitem{bui2004aerodynamic}
Tan Bui-Thanh, Murali Damodaran, and Karen Willcox.
\newblock Aerodynamic data reconstruction and inverse design using proper
  orthogonal decomposition.
\newblock {\em AIAA journal}, 42(8):1505--1516, 2004.

\bibitem{businger1996balloons}
Steven Businger, Steven~R Chiswell, Warren~C Ulmer, and Randy Johnson.
\newblock Balloons as a {L}agrangian measurement platform for atmospheric
  research.
\newblock {\em Journal of Geophysical Research: Atmospheres},
  101(D2):4363--4376, 1996.

\bibitem{carlberg2019recovering}
Kevin~T Carlberg, Antony Jameson, Mykel~J Kochenderfer, Jeremy Morton, Liqian
  Peng, and Freddie~D Witherden.
\newblock Recovering missing {CFD} data for high-order discretizations using
  deep neural networks and dynamics learning.
\newblock {\em Journal of Computational Physics}, 395:105--124, 2019.

\bibitem{castellari2001prediction}
Sergio Castellari, Annalisa Griffa, Tamay~M {\"O}zg{\"o}kmen, and Pierre-Marie
  Poulain.
\newblock Prediction of particle trajectories in the {A}driatic sea using
  {L}agrangian data assimilation.
\newblock {\em Journal of Marine Systems}, 29(1-4):33--50, 2001.

\bibitem{centurioni2017global}
Luca Centurioni, Andr{\'a}s Hor{\'a}nyi, Carla Cardinali, Etienne Charpentier,
  and Rick Lumpkin.
\newblock A global ocean observing system for measuring sea level atmospheric
  pressure: {E}ffects and impacts on numerical weather prediction.
\newblock {\em Bulletin of the American Meteorological Society},
  98(2):231--238, 2017.

\bibitem{chen2020information}
Nan Chen.
\newblock An information criterion for choosing observation locations in data
  assimilation and prediction.
\newblock {\em SIAM/ASA Journal on Uncertainty Quantification},
  8(4):1548--1573, 2020.

\bibitem{chen2020learning}
Nan Chen.
\newblock Learning nonlinear turbulent dynamics from partial observations via
  analytically solvable conditional statistics.
\newblock {\em Journal of Computational Physics}, 418:109635, 2020.

\bibitem{chen2023stochastic}
Nan Chen.
\newblock {\em Stochastic Methods for Modeling and Predicting Complex Dynamical
  Systems: Uncertainty Quantification, State Estimation, and Reduced-Order
  Models}.
\newblock Springer Nature, 2023.

\bibitem{chen2023uncertainty}
Nan Chen and Shubin Fu.
\newblock Uncertainty quantification of nonlinear {L}agrangian data
  assimilation using linear stochastic forecast models.
\newblock {\em Physica D: Nonlinear Phenomena}, page 133784, 2023.

\bibitem{chen2022efficient}
Nan Chen, Shubin Fu, and Georgy~E Manucharyan.
\newblock An efficient and statistically accurate {L}agrangian data
  assimilation algorithm with applications to discrete element sea ice models.
\newblock {\em Journal of Computational Physics}, 455:111000, 2022.

\bibitem{chen2023launching}
Nan Chen, Evelyn Lunasin, and Stephen Wiggins.
\newblock Launching drifter observations in the presence of uncertainty.
\newblock {\em arXiv preprint arXiv:2307.12779}, 2023.

\bibitem{chen2016model}
Nan Chen and Andrew~J Majda.
\newblock Model error in filtering random compressible flows utilizing noisy
  {L}agrangian tracers.
\newblock {\em Monthly Weather Review}, 144(11):4037--4061, 2016.

\bibitem{chen2018conditional}
Nan Chen and Andrew~J Majda.
\newblock Conditional {G}aussian systems for multiscale nonlinear stochastic
  systems: {P}rediction, state estimation and uncertainty quantification.
\newblock {\em Entropy}, 20(7):509, 2018.

\bibitem{chen2014information}
Nan Chen, Andrew~J Majda, and Xin~T Tong.
\newblock Information barriers for noisy {L}agrangian tracers in filtering
  random incompressible flows.
\newblock {\em Nonlinearity}, 27(9):2133, 2014.

\bibitem{chen2015noisy}
Nan Chen, Andrew~J Majda, and Xin~T Tong.
\newblock Noisy {L}agrangian tracers for filtering random rotating compressible
  flows.
\newblock {\em Journal of Nonlinear Science}, 25(3):451--488, 2015.

\bibitem{chu2021data}
Bryan Chu and Mohammad Farazmand.
\newblock Data-driven prediction of multistable systems from sparse
  measurements.
\newblock {\em Chaos: An Interdisciplinary Journal of Nonlinear Science},
  31(6), 2021.

\bibitem{dang2021dmd}
Fengying Dang, Sanjida Nasreen, and Feitian Zhang.
\newblock {DMD}-based background flow sensing for {AUV}s in flow pattern
  changing environments.
\newblock {\em IEEE Robotics and Automation Letters}, 6(3):5207--5214, 2021.

\bibitem{delsole2004predictability}
Timothy DelSole.
\newblock Predictability and information theory. {P}art {I}: {M}easures of
  predictability.
\newblock {\em Journal of the atmospheric sciences}, 61(20):2425--2440, 2004.

\bibitem{drmac2016new}
Zlatko Drmac and Serkan Gugercin.
\newblock A new selection operator for the discrete empirical interpolation
  method---improved a priori error bound and extensions.
\newblock {\em SIAM Journal on Scientific Computing}, 38(2):A631--A648, 2016.

\bibitem{erichson2020shallow}
N~Benjamin Erichson, Lionel Mathelin, Zhewei Yao, Steven~L Brunton, Michael~W
  Mahoney, and J~Nathan Kutz.
\newblock Shallow neural networks for fluid flow reconstruction with limited
  sensors.
\newblock {\em Proceedings of the Royal Society A}, 476(2238):20200097, 2020.

\bibitem{evensen2000ensemble}
Geir Evensen and Peter~Jan Van~Leeuwen.
\newblock An ensemble {K}alman smoother for nonlinear dynamics.
\newblock {\em Monthly Weather Review}, 128(6):1852--1867, 2000.

\bibitem{farazmand2023tensor}
Mohammad Farazmand and Arvind~K Saibaba.
\newblock Tensor-based flow reconstruction from optimally located sensor
  measurements.
\newblock {\em Journal of Fluid Mechanics}, 962:A27, 2023.

\bibitem{farrell1993stochastic}
Brian~F Farrell and Petros~J Ioannou.
\newblock Stochastic forcing of the linearized {N}avier--{S}tokes equations.
\newblock {\em Physics of Fluids A: Fluid Dynamics}, 5(11):2600--2609, 1993.

\bibitem{fish2021entropic}
Jeremie Fish, Alexander DeWitt, Abd AlRahman~R AlMomani, Paul~J Laurienti, and
  Erik Bollt.
\newblock Entropic regression with neurologically motivated applications.
\newblock {\em Chaos: An Interdisciplinary Journal of Nonlinear Science},
  31(11), 2021.

\bibitem{fukami2019super}
Kai Fukami, Koji Fukagata, and Kunihiko Taira.
\newblock Super-resolution reconstruction of turbulent flows with machine
  learning.
\newblock {\em Journal of Fluid Mechanics}, 870:106--120, 2019.

\bibitem{garcia2022lagrangian}
V{\'\i}ctor~J Garc{\'\i}a-Garrido and Stephen Wiggins.
\newblock Lagrangian descriptors and the action integral of classical
  mechanics.
\newblock {\em Physica D: Nonlinear Phenomena}, 434:133206, 2022.

\bibitem{garcia2022structured}
Guillermo Garc{\'\i}a-S{\'a}nchez, Ana~M Mancho, Antonio~G Ramos, Josep Coca,
  and Stephen Wiggins.
\newblock Structured pathways in the turbulence organizing recent oil spill
  events in the eastern mediterranean.
\newblock {\em Scientific Reports}, 12(1):3662, 2022.

\bibitem{gardiner1985handbook}
Crispin~W Gardiner et~al.
\newblock {\em Handbook of stochastic methods}, volume~3.
\newblock springer Berlin, 1985.

\bibitem{gould2004argo}
John Gould, Dean Roemmich, Susan Wijffels, Howard Freeland, Mark Ignaszewsky,
  Xu~Jianping, Sylvie Pouliquen, Yves Desaubies, Uwe Send, Kopillil
  Radhakrishnan, et~al.
\newblock Argo profiling floats bring new era of in situ ocean observations.
\newblock {\em Eos, Transactions American Geophysical Union}, 85(19):185--191,
  2004.

\bibitem{griffa2007lagrangian}
Annalisa Griffa, AD~Kirwan~Jr, Arthur~J Mariano, Tamay {\"O}zg{\"o}kmen, and
  H~Thomas Rossby.
\newblock {\em Lagrangian analysis and prediction of coastal and ocean
  dynamics}.
\newblock Cambridge University Press, 2007.

\bibitem{harlim2008filtering}
J~Harlim and AJ~Majda.
\newblock Filtering nonlinear dynamical systems with linear stochastic models.
\newblock {\em Nonlinearity}, 21(6):1281, 2008.

\bibitem{harlim2013test}
John Harlim and Andrew~J Majda.
\newblock Test models for filtering and prediction of moisture-coupled tropical
  waves.
\newblock {\em Quarterly Journal of the Royal Meteorological Society},
  139(670):119--136, 2013.

\bibitem{hernandez1995optimizing}
Fabrice Hernandez, Pierre-Yves~Le Traon, and Norman~H Barth.
\newblock Optimizing a drifter cast strategy with a genetic algorithm.
\newblock {\em Journal of Atmospheric and Oceanic Technology}, 12(2):330--345,
  1995.

\bibitem{herzog2015sequentially}
Roland Herzog and Ilka Riedel.
\newblock Sequentially optimal sensor placement in thermoelastic models for
  real time applications.
\newblock {\em Optimization and Engineering}, 16:737--766, 2015.

\bibitem{honnorat2009lagrangian}
Marc Honnorat, J{\'e}r{\^o}me Monnier, and Fran{\c{c}}ois-Xavier Le~Dimet.
\newblock Lagrangian data assimilation for river hydraulics simulations.
\newblock {\em Computing and visualization in science}, 12(5):235--246, 2009.

\bibitem{ide2002lagrangian}
Kayo Ide, Leonid Kuznetsov, and Christopher~KRT Jones.
\newblock Lagrangian data assimilation for point vortex systems.
\newblock {\em Journal of Turbulence}, 3(1):053, 2002.

\bibitem{janjic2018representation}
Tijana Janji{\'c}, Niels Bormann, Marc Bocquet, JA~Carton, Stephen~E Cohn,
  Sarah~L Dance, Svetlana~N Losa, Nancy~K Nichols, Roland Potthast, Joanne~A
  Waller, et~al.
\newblock On the representation error in data assimilation.
\newblock {\em Quarterly Journal of the Royal Meteorological Society},
  144(713):1257--1278, 2018.

\bibitem{kang2012filtering}
Emily~L Kang and John Harlim.
\newblock Filtering nonlinear spatio-temporal chaos with autoregressive linear
  stochastic models.
\newblock {\em Physica D: Nonlinear Phenomena}, 241(12):1099--1113, 2012.

\bibitem{kim2017causation}
Pileun Kim, Jonathan Rogers, Jie Sun, and Erik Bollt.
\newblock Causation entropy identifies sparsity structure for parameter
  estimation of dynamic systems.
\newblock {\em Journal of Computational and Nonlinear Dynamics}, 12(1):011008,
  2017.

\bibitem{kleeman2002measuring}
Richard Kleeman.
\newblock Measuring dynamical prediction utility using relative entropy.
\newblock {\em Journal of the atmospheric sciences}, 59(13):2057--2072, 2002.

\bibitem{kleeman2011information}
Richard Kleeman.
\newblock Information theory and dynamical system predictability.
\newblock {\em Entropy}, 13(3):612--649, 2011.

\bibitem{kramer2017sparse}
Boris Kramer, Piyush Grover, Petros Boufounos, Saleh Nabi, and Mouhacine
  Benosman.
\newblock Sparse sensing and {DMD}-based identification of flow regimes and
  bifurcations in complex flows.
\newblock {\em SIAM Journal on Applied Dynamical Systems}, 16(2):1164--1196,
  2017.

\bibitem{kullback1959statistics}
Solomon Kullback.
\newblock Statistics and information theory.
\newblock {\em J Wiley Sons, New York}, 1959.

\bibitem{kullback1987letter}
Solomon Kullback.
\newblock Letter to the editor: The kullback-leibler distance.
\newblock {\em AMERICAN STATISTICIAN}, 1987.

\bibitem{kullback1951information}
Solomon Kullback and Richard~A Leibler.
\newblock On information and sufficiency.
\newblock {\em The annals of mathematical statistics}, 22(1):79--86, 1951.

\bibitem{li2020predictability}
Ying Li and Samuel~N Stechmann.
\newblock Predictability of tropical rainfall and waves: {E}stimates from
  observational data.
\newblock {\em Quarterly Journal of the Royal Meteorological Society},
  146(729):1668--1684, 2020.

\bibitem{liptser2013statistics}
Robert~S Liptser and Albert~N Shiryaev.
\newblock {\em Statistics of random processes {II}: {A}pplications}, volume~6.
\newblock Springer Science \& Business Media, 2013.

\bibitem{lopesino2017theoretical}
Carlos Lopesino, Francisco Balibrea-Iniesta, V{\'\i}ctor~J Garc{\'\i}a-Garrido,
  Stephen Wiggins, and Ana~M Mancho.
\newblock A theoretical framework for {L}agrangian descriptors.
\newblock {\em International Journal of Bifurcation and Chaos}, 27(01):1730001,
  2017.

\bibitem{madrid2009distinguished}
JA~Jim{\'e}nez Madrid and Ana~M Mancho.
\newblock Distinguished trajectories in time dependent vector fields.
\newblock {\em Chaos: An Interdisciplinary Journal of Nonlinear Science},
  19(1):013111, 2009.

\bibitem{majda2003introduction}
Andrew Majda.
\newblock {\em Introduction to PDEs and Waves for the Atmosphere and Ocean},
  volume~9.
\newblock American Mathematical Soc., 2003.

\bibitem{majda2005information}
Andrew Majda, Rafail~V Abramov, and Marcus~J Grote.
\newblock {\em Information theory and stochastics for multiscale nonlinear
  systems}, volume~25.
\newblock American Mathematical Soc., 2005.

\bibitem{majda2006nonlinear}
Andrew Majda and Xiaoming Wang.
\newblock {\em Nonlinear dynamics and statistical theories for basic
  geophysical flows}.
\newblock Cambridge University Press, 2006.

\bibitem{majda2016introduction}
Andrew~J Majda.
\newblock {\em Introduction to turbulent dynamical systems in complex systems}.
\newblock Springer, 2016.

\bibitem{majda2012lessons}
Andrew~J Majda and Michal Branicki.
\newblock Lessons in uncertainty quantification for turbulent dynamical
  systems.
\newblock {\em Discrete and Continuous Dynamical Systems}, 32(9):3133, 2012.

\bibitem{majda2018model}
Andrew~J Majda and Nan Chen.
\newblock Model error, information barriers, state estimation and prediction in
  complex multiscale systems.
\newblock {\em Entropy}, 20(9):644, 2018.

\bibitem{majda2010quantifying}
Andrew~J Majda and Boris Gershgorin.
\newblock Quantifying uncertainty in climate change science through empirical
  information theory.
\newblock {\em Proceedings of the National Academy of Sciences},
  107(34):14958--14963, 2010.

\bibitem{majda2011link}
Andrew~J Majda and Boris Gershgorin.
\newblock Link between statistical equilibrium fidelity and forecasting skill
  for complex systems with model error.
\newblock {\em Proceedings of the National Academy of Sciences},
  108(31):12599--12604, 2011.

\bibitem{mancho2013lagrangian}
Ana~M Mancho, Stephen Wiggins, Jezabel Curbelo, and Carolina Mendoza.
\newblock Lagrangian descriptors: {A} method for revealing phase space
  structures of general time dependent dynamical systems.
\newblock {\em Communications in Nonlinear Science and Numerical Simulation},
  18(12):3530--3557, 2013.

\bibitem{manohar2018data}
Krithika Manohar, Bingni~W Brunton, J~Nathan Kutz, and Steven~L Brunton.
\newblock Data-driven sparse sensor placement for reconstruction:
  {D}emonstrating the benefits of exploiting known patterns.
\newblock {\em IEEE Control Systems Magazine}, 38(3):63--86, 2018.

\bibitem{mendoza2010hidden}
Carolina Mendoza and Ana~M Mancho.
\newblock Hidden geometry of ocean flows.
\newblock {\em Physical review letters}, 105(3):038501, 2010.

\bibitem{mu2018arctic}
Longjiang Mu, Martin Losch, Qinghua Yang, Robert Ricker, Svetlana~N Losa, and
  Lars Nerger.
\newblock Arctic-wide sea ice thickness estimates from combining satellite
  remote sensing data and a dynamic ice-ocean model with data assimilation
  during the {CryoSat}-2 period.
\newblock {\em Journal of Geophysical Research: Oceans}, 123(11):7763--7780,
  2018.

\bibitem{poje2002drifter}
AC~Poje, M~Toner, AD~Kirwan, and CKRT Jones.
\newblock Drifter launch strategies based on {L}agrangian templates.
\newblock {\em Journal of physical oceanography}, 32(6):1855--1869, 2002.

\bibitem{salman2008hybrid}
H~Salman.
\newblock A hybrid grid/particle filter for {L}agrangian data assimilation.
  {I}: {F}ormulating the passive scalar approximation.
\newblock {\em Quarterly Journal of the Royal Meteorological Society},
  134(635):1539--1550, 2008.

\bibitem{salman2008using}
H~Salman, K~Ide, and Christopher~KRT Jones.
\newblock Using flow geometry for drifter deployment in {L}agrangian data
  assimilation.
\newblock {\em Tellus A: Dynamic Meteorology and Oceanography}, 60(2):321--335,
  2008.

\bibitem{sarkka2013bayesian}
Simo S{\"a}rkk{\"a}.
\newblock {\em Bayesian filtering and smoothing}.
\newblock Cambridge university press, 2013.

\bibitem{sun2019lagrangian}
Luyu Sun and Stephen~G Penny.
\newblock Lagrangian data assimilation of surface drifters in a double-gyre
  ocean model using the local ensemble transform {K}alman filter.
\newblock {\em Monthly Weather Review}, 147(12):4533--4551, 2019.

\bibitem{treshnikov1986optimal}
AF~Treshnikov, VV~GURETSKII, AI~Danilov, VN~EREMEEV, and LM~Ivanov.
\newblock Optimal disposition of satellite-tracked drifting buoys in the south
  {A}tlantic(ob optimal'nom razmeshchenii v iuzhnoi atlantike dreifuiushchikh
  buev, otslezhivaemykh so sputnikov).
\newblock In {\em Akademiia Nauk SSSR, Doklady}, volume 287, pages 430--434,
  1986.

\bibitem{tukan2023efficient}
Murad Tukan, Eli Biton, and Roee Diamant.
\newblock An efficient drifters deployment strategy to evaluate water current
  velocity fields.
\newblock {\em arXiv preprint arXiv:2301.04216}, 2023.

\bibitem{vallis2017atmospheric}
Geoffrey~K Vallis.
\newblock {\em Atmospheric and oceanic fluid dynamics}.
\newblock Cambridge University Press, 2017.

\bibitem{van2012origin}
Erik Van~Sebille, Matthew~H England, and Gary Froyland.
\newblock Origin, dynamics and evolution of ocean garbage patches from observed
  surface drifters.
\newblock {\em Environmental Research Letters}, 7(4):044040, 2012.

\bibitem{wang2021feasibility}
Yuepeng Wang, Xuemei Ding, Kun Hu, Fangxin Fang, IM~Navon, and Guang Lin.
\newblock Feasibility of {DEIM} for retrieving the initial field via
  dimensionality reduction.
\newblock {\em Journal of Computational Physics}, 429:110005, 2021.

\bibitem{willcox2006unsteady}
Karen Willcox.
\newblock Unsteady flow sensing and estimation via the gappy proper orthogonal
  decomposition.
\newblock {\em Computers \& fluids}, 35(2):208--226, 2006.

\bibitem{xu2007measuring}
Qin Xu.
\newblock Measuring information content from observations for data
  assimilation: {R}elative entropy versus {S}hannon entropy difference.
\newblock {\em Tellus A: Dynamic Meteorology and Oceanography}, 59(2):198--209,
  2007.

\bibitem{zeng2018representation}
Yuefei Zeng, Tijana Janji{\'c}, Alberto de~Lozar, Ulrich Blahak, Hendrik Reich,
  Christian Keil, and Axel Seifert.
\newblock Representation of model error in convective-scale data assimilation:
  additive noise, relaxation methods, and combinations.
\newblock {\em Journal of Advances in Modeling Earth Systems},
  10(11):2889--2911, 2018.

\end{thebibliography}

\end{document}